\documentclass[12pt]{amsart}
\usepackage{amsmath, amsthm, amssymb, fullpage, enumerate, amsfonts, xypic, hyperref}

\def\ZZ{{\mathbb Z}}

\def\FF{{\mathbb F}}
\def\Fbar{{\overline\FF}}

\def\QQ{{\mathbb Q}}

\def\MM{{\mathrm M}}

\def\OO{{\mathcal O}}

\def\pp{{\mathfrak p}}

\def\aa{{\mathfrak a}}

\def\mm{{\mathfrak m}}
\def\ff{{\mathfrak f}}

\def\Pic{{\text{\textnormal{Pic}}}}
\def\Ann{{\text{\textnormal{Ann}}}}
\def\Hom{{\text{\textnormal{Hom}}}}
\def\Ext{{\text{\textnormal{Ext}}}}
\def\End{{\text{\textnormal{End}}}}

\newtheorem{theorem}{Theorem}[section]

\newtheorem{lemma}[theorem]{Lemma}
\newtheorem{prop}[theorem]{Proposition}

\newenvironment{introtheorem2}[1]{\vspace{2.5mm}\noindent\textbf{#1.}\em }{\vspace{2.5mm}}

\usepackage[dvipsnames]{xcolor}

\title{The Structure of the Group of Rational Points of {an~Abelian Variety} over a Finite Field}
\author{Caleb Springer}
\address{Department of Mathematics, The Pennsylvania State University, University Park, PA 16802, USA}
\email{cks5320@psu.edu}
\date{\today}

\begin{document}
\maketitle

\begin{abstract}
    Let $A$ be a simple abelian variety of dimension $g$ defined over a finite field $\FF_q$ with Frobenius endomorphism $\pi$.  This paper describes the structure of the group of rational points $A(\FF_{q^n})$, for all $n \geq 1$, as a module over the ring $R$ of endomorphisms which are defined over $\FF_q$, under certain technical conditions. If $[\QQ(\pi) : \QQ] = 2g$ and $R$ is a Gorenstein ring, then ${A(\FF_{q^n}) \cong R/R(\pi^n-1)}$.  This includes the case when $A$ is ordinary and has maximal real multiplication. Otherwise, if $Z$ is the center of $R$ and $(\pi^n - 1)Z$ is the product of invertible prime ideals in $Z$, then $A(\FF_{q^n})^d \cong R/R(\pi^n - 1)$ where $d = 2g/[\QQ(\pi):\QQ]$.  Finally, we deduce the structure of $A(\Fbar_q)$ as a module over $R$ under similar conditions. These results generalize results of Lenstra for elliptic curves.
\end{abstract}

\section{Introduction}

Given an abelian variety $A$ over a finite field $\FF_q$, one may view the group of rational points $A(\FF_{q})$ as a module over the ring $\End_{\FF_q}(A)$ of endomorphisms defined over $\FF_q$.  Lenstra completely described this module structure for  elliptic curves over finite fields in the following theorem. In addition to being useful and interesting in its own right, this theorem also determines \emph{a fortiori} the underlying abelian group structure of $A(\FF_q)$ purely in terms of the endomorphism ring.  The latter perspective has been leveraged for the sake of computational number theory and cryptography; see, for example, the work of Galbraith \cite[Lemma 1]{galbraith}, Ionica and Joux \cite[\S2.3]{ij}, and Kohel \cite[Chapter 4]{kohel}.  The goal of this paper is to generalize Lenstra's theorem beyond elliptic curves to abelian varieties of arbitrary dimension.

\begin{theorem}[{\cite{lenstra}, Theorem 1}]
\label{len-thm}
	Let $E$ be an elliptic curve over $\FF_q$. Write $R = \End_{\FF_q}(E)$ and let $\pi \in R$ be the Frobenius endomorphism of $E$.  
	\begin{enumerate}[(a)]
	\item  Suppose that $\pi \notin\ZZ$.  Then $R$ has rank $2$ over $\ZZ$ and there is an isomorphism of $R$-modules
	$$
		E(\FF_{q^n}) \cong R/(\pi^n - 1)R.
	$$
	\item Suppose that $\pi \in \ZZ$.  Then $R$ has rank $4$ over $\ZZ$, we have 
	$$
		E(\FF_{q^n}) \cong \ZZ/\ZZ(\pi^n - 1) \oplus \ZZ/\ZZ(\pi^n - 1)
	$$
	as abelian groups.
	 Further, this group has up to isomorphism exactly one left $R$-module structure, and one has an isomorphism of $R$-modules
	$$
		E(\FF_{q^n}) \oplus E(\FF_{q^n}) \cong R/ R(\pi^n- 1).
	$$
	\end{enumerate}
\end{theorem}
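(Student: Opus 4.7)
The strategy is to study $E(\FF_{q^n}) = \ker\bigl(\pi^n - 1 : E(\Fbar_q) \to E(\Fbar_q)\bigr)$ prime-by-prime, via the Tate modules $T_\ell E$ for $\ell \neq p$ together with a $p$-adic analogue (for instance, the covariant Dieudonn\'e module of the $p$-divisible group) for the $p$-primary component, and then to reassemble the $R$-module structure.

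For part (a), $K := \QQ(\pi)$ is an imaginary quadratic field and $R$ is an order in $\OO_K$. Since every order in a quadratic field is monogenic over $\ZZ$ (generated by $1$ and $\pi$), $R$ is Gorenstein, and hence so is every completion $R_\ell := R \otimes_\ZZ \ZZ_\ell$. The central task is to prove that $T_\ell E$ is free of rank $1$ over $R_\ell$ for each $\ell \neq p$. One knows $V_\ell E := T_\ell E \otimes_{\ZZ_\ell} \QQ_\ell$ is free of rank $1$ over $K_\ell := K \otimes_\QQ \QQ_\ell$ by a $\QQ_\ell$-dimension count, so $T_\ell E$ is a faithful $R_\ell$-lattice in $V_\ell E$. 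The Weil pairing exhibits $T_\ell E$ as self-dual (up to a Tate twist) as a $\ZZ_\ell$-module; combining this self-duality with the Gorenstein hypothesis on $R_\ell$ forces $T_\ell E \cong R_\ell$ as $R_\ell$-modules. Consequently,
\[
	E(\FF_{q^n})[\ell^\infty] \cong T_\ell E / (\pi^n - 1)\, T_\ell E \cong R_\ell / (\pi^n - 1)R_\ell.
\]
A parallel argument with the Dieudonn\'e module handles the $p$-primary part, and reassembling the local pictures yields $E(\FF_{q^n}) \cong R/(\pi^n - 1)R$.

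For part (b), $\pi \in \ZZ$ forces $E$ to be supersingular with $\pi = \pm p^k$ (writing $q = p^{2k}$), so $E(\Fbar_q)[p^\infty] = 0$ and $N := \pi^n - 1$ is coprime to $p$. The ring $R$ is a maximal order in the quaternion algebra $B_p$ ramified exactly at $p$ and $\infty$, which splits at every prime $\ell \neq p$; thus $R/NR \cong M_2(\ZZ/N\ZZ)$. By Morita equivalence, every left $M_2(\ZZ/N)$-module is of the form $V^{\oplus 2}$ for a unique $\ZZ/N$-module $V$. Since $E(\FF_{q^n})$ has underlying abelian group $(\ZZ/N)^2$ (as can be read off from $T_\ell E$ for $\ell \neq p$, as in part (a)), one must have $V = \ZZ/N$, which proves uniqueness of the $R$-module structure up to isomorphism. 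Moreover, $E(\FF_{q^n})^{\oplus 2}$ corresponds via Morita to $(\ZZ/N)^{\oplus 2}$, which is precisely the standard $M_2(\ZZ/N)$-module $R/NR$, yielding the desired isomorphism.

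The main obstacle is the freeness of $T_\ell E$ over $R_\ell$ in part (a) at primes $\ell$ dividing the conductor $[\OO_K : R]$. Away from the conductor, $R_\ell$ is a discrete valuation ring and the statement is immediate, but at a non-maximal prime the Gorenstein property must be used in an essential way, together with the self-duality input from the Weil pairing. A parallel subtlety arises for the $p$-primary part in part (a), where the Dieudonn\'e module replaces $T_\ell E$ and requires its own freeness verification.
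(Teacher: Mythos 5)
Part (b) of your proposal is essentially Lenstra's (and the paper's) argument: $\pi^n-1=N\in\ZZ$ is prime to $p$, so $E(\FF_{q^n})=E[N]\cong(\ZZ/N\ZZ)^2$, and the ring isomorphism $R/NR\cong \MM_2(\ZZ/N\ZZ)$ plus Morita equivalence gives both the uniqueness of the $R$-module structure and $E(\FF_{q^n})^{\oplus 2}\cong R/NR$; that part is fine (you should justify $R/NR\cong\MM_2(\ZZ/N\ZZ)$, e.g.\ via Tate's theorem at $\ell\neq p$ or via the faithful embedding $R/NR\hookrightarrow\End(E[N])$ and a cardinality count, rather than just quoting maximality of $R$).

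In part (a), however, the crucial step is not proved. You claim that $T_\ell E$ is free of rank $1$ over $R_\ell$ because it is ``a faithful $R_\ell$-lattice'' that is ``self-dual as a $\ZZ_\ell$-module'' and $R_\ell$ is Gorenstein. Both stated inputs are vacuous: any $R_\ell$-stable lattice in $V_\ell E\cong K_\ell$ is automatically faithful, and every $\ZZ_\ell$-lattice is self-dual as a $\ZZ_\ell$-module. They cannot force freeness: if $\ell$ divides the conductor $[\OO_K:R]$, the lattice $\OO_K\otimes\ZZ_\ell\subseteq K_\ell$ is a faithful $R_\ell$-lattice, self-dual over $\ZZ_\ell$ and even satisfying the Weil-pairing-type duality $T^\vee\cong\overline{T}$ as $R_\ell$-modules, yet it is not free over $R_\ell$. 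The missing ingredient is exactly that $R$ is the \emph{full} ring $\End_{\FF_q}(E)$: by Tate's theorem $R\otimes\ZZ_\ell\cong\End_{\ZZ_\ell[\pi]}(T_\ell E)$, so the multiplier ring of the lattice $T_\ell E$ is precisely $R_\ell$, and only then does the Gorenstein property of quadratic orders (proper lattices are invertible, hence free over the semilocal ring $R_\ell$) yield $T_\ell E\cong R_\ell$; the same full-commutant input is needed for your Dieudonn\'e-module treatment of the $p$-part, which is also more delicate than ``parallel'' since only the \'etale part of $E[p^\infty]$ contributes rational points. Note that the paper (following Lenstra) sidesteps Tate modules entirely: it works directly with $E[s]$ for $s=\pi^n-1$ separable, shows $E[s]$ is a faithful module over the finite ring $R/Rs$ via the universal property of quotient isogenies (this is where ``$R$ is the whole endomorphism ring'' enters), uses the finite local Gorenstein structure to produce a free rank-one submodule, and concludes by the count $\#E[s]=\deg s=\#(R/Rs)$. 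Also, a minor slip: $R$ need not be generated by $\pi$ over $\ZZ$; it is monogenic (hence Gorenstein) because every order in a quadratic field is, but possibly with a generator other than $\pi$.
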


Notice that $E$ is supersingular in the second case, but not conversely.  To prove the theorem, Lenstra notes that $E(\FF_{q^n}) = E[\pi^n - 1]$, and $\pi^n - 1$ is a separable isogeny.  For part (b), the abelian group structure is simply the well-known structure of the $n$-torsion of an elliptic curve for $n\in \ZZ$.  The additional statements in part (b) follow from Morita equivalence and an isomorphism of rings,  for integers $n$ coprime to $q$, between $R/Rn$ and the ring $\MM_2( \ZZ/n\ZZ)$ of $2\times 2$ matrices with coefficients in $\ZZ/n\ZZ$.
 
  For part (a) of the theorem, Lenstra uses the following proposition; see \cite[{Proposition~2.1}]{lenstra}.
\begin{prop}
\label{len-key-prop}
Let $E$ be an elliptic curve over $\FF_q$, and let $R = \End_{\FF_q} E$.  If $[R: \ZZ] = 2$, then for every separable element $s\in R$ there is an isomorphism $E[s] \cong R/Rs$ of $R$-modules.
 \end{prop}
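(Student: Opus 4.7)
The plan is to prove $E[s] \cong R/Rs$ by matching cardinalities and constructing a surjection $R/Rs \twoheadrightarrow E[s]$; any such surjection between finite sets of the same size is an isomorphism. Since $[R:\ZZ]=2$, $R$ is an order in the imaginary quadratic field $K = \QQ(\pi)$. The determinant of multiplication by $s$ on the rank-$2$ free $\ZZ$-module $R$ equals $N_{K/\QQ}(s) = s\bar{s}$, which in turn equals $\deg s$ because the Rosati involution on $R$ restricts to complex conjugation on $K$. On the other side, $|E[s]| = \deg s$ since $s$ is separable. So $|R/Rs| = |E[s]|$, and the problem reduces to producing the surjection, equivalently, to showing $E[s]$ is a cyclic $R$-module.

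I would check cyclicity one prime $\ell$ at a time (with $\ell \neq p$, since $s$ is separable). Let $R_\ell = R \otimes \ZZ_\ell$ and $K_\ell = K \otimes \QQ_\ell$. The $R_\ell$-action on the free rank-$2$ $\ZZ_\ell$-module $T_\ell E$ is faithful, and rationally $V_\ell E$ is one-dimensional over $K_\ell$, so $T_\ell E$ embeds as a finitely generated $R_\ell$-submodule of $K_\ell$, i.e., as a fractional $R_\ell$-ideal. Because $s$ is invertible on $V_\ell E$, multiplication by $s$ identifies the $\ell$-primary part of $E[s]$ with $T_\ell E / s T_\ell E$. Thus it suffices to show $T_\ell E \cong R_\ell$ as $R_\ell$-modules, for then $E[s]_\ell \cong R_\ell / s R_\ell$, and reassembling over $\ell$ yields $E[s] \cong R/Rs$.

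The substantive step is therefore $T_\ell E \cong R_\ell$. By Tate's isogeny theorem, $R_\ell$ is the full ring of Galois-equivariant endomorphisms of $T_\ell E$; since Galois acts through the non-central element $\pi$, its centralizer in $\End_{\QQ_\ell}(V_\ell E) = \MM_2(\QQ_\ell)$ is precisely $K_\ell$, so $R_\ell$ coincides with the multiplier order $\{x \in K_\ell : x T_\ell E \subseteq T_\ell E\}$ of $T_\ell E$. A fractional ideal of a quadratic order which is equal to its own multiplier order is \emph{proper}, equivalently \emph{invertible}, and invertible modules over the semi-local ring $R_\ell$ are automatically free of rank one. The main obstacle is controlling the situation at primes $\ell$ dividing the conductor of $R$ in $\OO_K$, where $R_\ell$ is a non-maximal order in $K_\ell$; here the proper-equals-invertible fact for quadratic orders, together with semi-locality of $R_\ell$, is exactly what keeps the argument intact.
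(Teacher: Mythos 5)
Your reduction to primes $\ell \neq p$ rests on the parenthetical claim that separability of $s$ makes the prime $p$ irrelevant, and that claim is false. Separability means the kernel of $s$ is \'etale, equivalently that $s$ acts invertibly on the invariant differentials; it does not mean that $p \nmid \deg s$, because \'etale group schemes of $p$-power order do occur inside ordinary curves. Concretely, take an ordinary curve over $\FF_p$ with trace $a = 1$ (such curves exist for every $p$), so $\#E(\FF_p) = p$ and $[R:\ZZ] = 2$; then $s = \pi - 1$ is separable (it acts as $-1 \neq 0$ on differentials) but $\deg s = p$, and both $E[s]$ and $R/Rs$ have nontrivial $p$-primary parts. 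This is not a marginal case: the proposition is applied precisely to $s = \pi^n - 1$ in Theorem \ref{len-thm}(a), and $p \mid \#E(\FF_{q^n})$ happens for many ordinary curves. At $\ell = p$ your machinery breaks down: $T_p E$ has $\ZZ_p$-rank at most $1$, so the appeal to a rank-$2$ Tate module, to Tate's theorem inside $\MM_2(\QQ_\ell)$, and to the centralizer computation is unavailable. Consequently the final ``reassembling over $\ell$'' omits the $p$-part of $E[s]$, and the global cardinality identity $\#R/Rs = \deg s$ you set up at the start (whose $p$-part is exactly what is being dropped) can no longer close the argument.

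Away from $p$ your argument is essentially sound, and it is a genuinely different route from the one in the paper: there (Proposition \ref{gor-key-prop}, specialized to quadratic orders, which are always Gorenstein) one shows that $R/Rs$ is a finite product of local Artinian Gorenstein rings, that a faithful module over such a ring contains a free rank-$1$ submodule, and then concludes by the same counting $\#E[s] = \deg s = N_{K/\QQ}(s) = \#R/Rs$; that argument treats all residue characteristics, including $p$, uniformly, which is exactly what your localization does not. To repair your proof you need a separate argument at $p$: for ordinary $E$ one can note that $p$ does not divide the conductor of $R$ (since $p \nmid a^2 - 4q$), so $R \otimes \ZZ_p \cong \ZZ_p \times \ZZ_p$, that separability of $s$ means $s$ is a unit in the factor corresponding to the connected part of $E[p^\infty]$, and that the $p$-part of $E[s]$ lies in $E[p^\infty](\Fbar_q) \cong \QQ_p/\ZZ_p$, a rank-$1$ module over the \'etale factor, giving $E[s]_p \cong (R/Rs)\otimes\ZZ_p$; for supersingular $E$ with $[R:\ZZ]=2$ both $p$-parts vanish. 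Alternatively one can invoke the analogue of Tate's theorem at $p$ via Dieudonn\'e modules. Either way this is an additional tool your proposal does not supply, and as written the proof has a hole exactly at the prime $p$.
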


 Lenstra  showed in his original paper that the preceding proposition does not immediately generalize to all ``nice" abelian varieties of higher dimension, i.e. principally polarized ordinary abelian varieties;  see \cite[Proposition 6.4]{lenstra}.  Although this means that a certain natural generalization is not correct, the examples that Lenstra produces must have very particular endomorphism rings. By inspecting Lenstra's theorem through two perspectives and imposing restrictions on the endomorphism ring, we can recover a  natural generalization to certain abelian varieties of higher dimension.

 \subsection{First Perspective: Gorenstein Rings}
First, consider part (a) of Lenstra's theorem, or more generally, Proposition \ref{len-key-prop}. In this case, the endomorphism ring of the elliptic curve is commutative, specifically an order in an imaginary quadratic number field.  In general, a simple abelian variety $A$ of dimension $g$ over $\FF_q$ with Frobenius endomorphism $\pi$ has commutative endomorphism ring exactly when $[\QQ(\pi) : \QQ] = 2g$, and in this case, $\End_{\FF_q}(A)$ is an order in the field $\QQ(\pi)$ \cite[Theorem 8]{wm}.  In fact, if $\pi$ is an ordinary Weil $q$-integer, then the rings which arise as the endomorphism rings of abelian varieties in the corresponding isogeny class over $\FF_q$ are precisely the orders of $\QQ(\pi)$ which contain the minimal order $\ZZ[\pi, \overline\pi]$ \cite[Theorem 7.4]{waterhouse}.  Since every order in a quadratic number field is Gorenstein, restricting to the Gorenstein case for abelian varieties of arbitrary dimension provides us with our first natural generalization.

 \begin{introtheorem2}{Proposition~\ref{gor-key-prop}}
% \label{prop:gor}
	Let $A$ be a simple abelian variety over $\FF_q$ of dimension $g$ with Frobenius endomorphism $\pi$.  If $[\QQ(\pi) : \QQ] = 2g$ and $R = \End_{\FF_q}(A)$ is a Gorenstein ring, then there is an isomorphism of $R$-modules
	$$
		A[s] \cong R/Rs
	$$
	 for every separable $s\in R$.
\end{introtheorem2}

This proposition will be proved in Section \ref{sec:gor} by using properties of finite local Gorenstein rings. 
To see examples where the proposition applies, 
note that
$\End_{\FF_q}(A)$ is guaranteed to be Gorenstein if $A$ has \emph{maximal real multiplication}, i.e. if $\End_{\FF_q}(A)$ contains the ring of integers of the maximal totally real subfield of $\QQ(\pi)$; see \cite[Lemma 4.4]{bjw}.  Many recent results in the algorithmic study of abelian varieties over finite fields have productively focused on the case of maximal real multiplication, including results on point counting \cite{bglgmmst, gks}, isogeny graphs \cite{bjw, it, martindale}, and endomorphism ring computation \cite{springer}.
At the other extreme, Centeleghe and Stix have shown that the minimal order $\ZZ[\pi, \overline\pi]$ is also always Gorenstein, where $\pi$ is a Weil integer \cite[Theorem 11]{cs}.

\subsection{Second perspective: Modules over the center}
Now consider part (b) of Lenstra's theorem, where $E$ is a supersingular elliptic curve over $\FF_q$ with all endomorphisms defined.  Before describing the group of rational points $E(\FF_{q^n})$ as a module over the endomorphism ring $\End_{\FF_q}(E)$, Lenstra first identifies $E(\FF_{q^n})$ as an abelian group, i.e. a module over $\ZZ$.  Importantly, $\ZZ$ is the center of the endomorphism ring in this case.

Following this point of view, given a simple abelian variety $A$ over $\FF_q$ with Frobenius endomorphism $\pi$, we will first consider the structure of $A(\FF_{q^n})$ as a module of the center of $\End_{\FF_q}(A)$. Recall that the center of the endomorphism algebra $\End_{\FF_q}(A)\otimes \QQ$ is the field $\QQ(\pi)$ \cite[Theorem 8]{wm}. More generally, we can study $A[s]$ as a module over the center of the endomorphism ring $\End_{\FF_q}(A)$ for any separable endomorphism $s$ in the center, which leads us to the following result.

\begin{introtheorem2}{Proposition~\ref{inv-key-prop}}
Let $A$ be a simple abelian variety over $\FF_q$ of dimension $g$, and let $Z$ be the center of  $R = \End_{\FF_q}(A)$.  If $s$ is a separable element of $Z$ for which $sZ$ is the product of invertible prime ideals in $Z$, then there is an isomorphism of $Z$-modules
	$$
		A[s] \cong (Z/Zs)^d
	$$
where $d = 2g/[\QQ(\pi):\QQ]$.
Moreover, this $Z$-module has exactly one $R$-module structure, up to isomorphism.
The unique $R$-module structure comes from the isomorphism of rings ${R/Rs \cong \MM_d(Z/Zs)}$, and there is an isomorphism
$$
	A[s]^{d} \cong R/Rs
$$
as $R$-modules.
 \end{introtheorem2}
 
 This proposition will be proved in Section \ref{sec:ker} through the study of kernel ideals.
  The latter parts of this proposition will follow from Morita equivalence, similarly to Theorem \ref{len-thm}.(b). 
Notice that we must require that  $sZ$ is the product of invertible prime ideals, which is automatically true when $Z$ is a maximal order.
For example, let $A$ be an abelian surface defined over $\FF_p$ in the isogeny class corresponding to the Weil polynomial $(t^2 - p)^2$ for a prime $p\not\equiv1\bmod4$.  This Weil polynomial corresponds to the Weil restriction of a supersingular elliptic curve over $\FF_{p^2}$, and $A$ is simple over $\FF_p$.  The endomorphism ring $\End_{\FF_p}(A)$ is a noncommutative ring whose center is $\ZZ[\sqrt p]$, which is a maximal order by construction because $p\not\equiv 1\bmod4$. Hence the proposition automatically applies in this case for any separable $s\in \ZZ[\sqrt p]$.

 \subsection{Main Result}
 
 Combining the perspectives outlined above, we have the following main result.
 \begin{theorem}
 \label{thm:main-result}
 For  $g\geq 1$, let $A$ be a simple abelian variety over $\FF_q$ of dimension $g$ with Frobenius endomorphism $\pi$.  Write $K = \QQ(\pi)$ and $R = \End_{\FF_q}(A)$, and let $Z$ be the center of $R$. 
 
 \begin{enumerate}[(a)]
 \item  If $[K : \QQ] = 2g$ and $R$ is a Gorenstein ring, then
 $$
 	A(\FF_{q^n}) \cong R/R(\pi^n - 1).
 $$
 \item If $(\pi^n - 1)Z$ is the product of invertible prime ideals in $Z$, then there is an isomorphism of $Z$-modules
 $$
		A(\FF_{q^n}) \cong (Z/Z(\pi^n - 1))^{d},
$$
where $d = 2g/[K:\QQ]$.
Moreover, this $Z$-module has exactly one left $R$-module structure, up to isomorphism.
This $R$-module structure comes from the isomorphism of rings ${R/R(\pi^n-1) \cong \MM_d(Z/Z(\pi^n - 1))}$, and there is an isomorphism of $R$-modules
$$
	A(\FF_{q^n})^{ d} \cong R/R(\pi^n - 1).
$$
 \end{enumerate} 
 \end{theorem}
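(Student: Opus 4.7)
The plan is to reduce both parts of the theorem to the corresponding propositions (Proposition~\ref{gor-key-prop} and Proposition~\ref{inv-key-prop}) already stated in the excerpt by specializing to the element $s = \pi^n - 1$. The crucial preliminary observation, exactly as in Lenstra's proof, is that
\[
A(\FF_{q^n}) = A[\pi^n - 1]
\]
as $R$-modules. This identity holds because a geometric point $P$ of $A$ is $\FF_{q^n}$-rational if and only if it is fixed by the $n$-th power of the Frobenius endomorphism, i.e.\ $\pi^n(P) = P$, which is equivalent to $(\pi^n - 1)(P) = 0$. Thus what needs to be checked is (i) that $\pi^n - 1$ is separable, and (ii) that it satisfies the hypothesis on $s$ in the relevant proposition.

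For separability, the standard fact is that an isogeny $\varphi$ on an abelian variety is separable if and only if it induces an isomorphism on tangent spaces at the origin. Since the Frobenius $\pi$ acts as $0$ on the tangent space, $\pi^n - 1$ acts as $-1$, which is invertible; hence $\pi^n - 1$ is separable. With this in hand, part (a) is immediate: under the hypotheses $[\QQ(\pi):\QQ] = 2g$ and $R$ Gorenstein, Proposition~\ref{gor-key-prop} applied to $s = \pi^n - 1 \in R$ yields
\[
A(\FF_{q^n}) = A[\pi^n - 1] \cong R/R(\pi^n - 1)
\]
as $R$-modules, which is exactly the conclusion of (a).

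For part (b), I first need to know that $\pi^n - 1$ lies in the center $Z$ of $R$. This follows from the fact that the Frobenius endomorphism is central in $\End_{\FF_q}(A)$ (it commutes with every $\FF_q$-rational endomorphism), so $\pi \in Z$ and therefore $\pi^n - 1 \in Z$. The hypothesis of (b) is that $(\pi^n - 1)Z$ is a product of invertible prime ideals in $Z$, which is precisely the hypothesis of Proposition~\ref{inv-key-prop} on $s$. Applying that proposition with $s = \pi^n - 1$ directly yields the $Z$-module isomorphism $A(\FF_{q^n}) \cong (Z/Z(\pi^n-1))^d$, the uniqueness of the $R$-module structure, the ring isomorphism $R/R(\pi^n-1) \cong \MM_d(Z/Z(\pi^n-1))$, and the final $R$-module isomorphism $A(\FF_{q^n})^d \cong R/R(\pi^n - 1)$.

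In short, the only work in this theorem beyond the two propositions is the verification that $s = \pi^n - 1$ meets the required hypotheses in each setting, and the identification $A(\FF_{q^n}) = A[\pi^n - 1]$. The genuine difficulty of the paper lies upstream, in proving Propositions~\ref{gor-key-prop} and~\ref{inv-key-prop} themselves; in particular, establishing the Gorenstein version $A[s] \cong R/Rs$ via local Gorenstein duality and the kernel-ideal analysis for the central case are where the real content sits. The step here that one must be careful about, and which I would double-check, is that the separability argument via the tangent space actually gives separability of $\pi^n - 1$ in $R$ in the sense used by the propositions (i.e.\ that the kernel has order equal to the reduced norm/degree), so that the propositions are genuinely applicable without any additional hypothesis on $n$ or on the characteristic.
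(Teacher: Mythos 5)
Your proposal is correct and follows essentially the same route as the paper: the paper likewise deduces the theorem immediately from Propositions~\ref{gor-key-prop} and~\ref{inv-key-prop} by taking $s = \pi^n - 1$, using the identification $A(\FF_{q^n}) = A[\pi^n - 1]$ and the separability of $\pi^n - 1$. The tangent-space argument for separability and the centrality of $\pi$ that you spell out are exactly the standard facts the paper leaves implicit, so no gap remains.
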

 
   Notice that parts (a) and (b) of the theorem provide the same answer in the case when all hypotheses are simultaneously satisfied, e.g. when $A$ is a simple ordinary abelian variety with maximal endomorphism ring.    The theorem follows immediately from the propositions above, given that ${A(\FF_{q^n}) = A[\pi^n - 1]}$ and $\pi^n - 1$ is a separable isogeny, as in the elliptic curve case.  Propositions \ref{gor-key-prop} and \ref{inv-key-prop} will be proved in Sections \ref{sec:gor} and \ref{sec:ker}, respectively, which completes the proof of our main theorem.
   Finally, in Section \ref{sec:Fbar}, we stitch together all of the isomorphisms described above to understand the structure of $A(\Fbar_q)$ as a module of the endomorphism ring $\End_{\FF_q}(A)$.

 \subsection{Acknowledgements}
 The author thanks Kirsten Eisentr\"ager and Stefano Marseglia for their helpful comments, and thanks Yuri Zarhin for suggesting a simplified approach to Lemma \ref{lem:submod}.
 The author was partially supported by National Science Foundation award CNS-1617802.
 
 %
 %Generalization #1
 %
\section{Gorenstein Rings}
\label{sec:gor}

The goal of this section is to prove the following generalization of Proposition \ref{len-key-prop}, as outlined in the introduction.

 \begin{prop}
 {\label{gor-key-prop}}
	Let $A$ be a simple abelian variety over $\FF_q$ of dimension $g$ with Frobenius endomorphism $\pi$.  If $[\QQ(\pi) : \QQ] = 2g$ and $R = \End_{\FF_q}(A)$ is a Gorenstein ring, then there is an isomorphism of $R$-modules
	$$
		A[s] \cong R/Rs
	$$
	 for every separable $s\in R$.
\end{prop}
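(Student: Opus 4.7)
The plan is to exhibit $A[s]$ as a faithful $R/Rs$-module of the correct cardinality and then appeal to a structural lemma about Artinian Gorenstein rings to promote this to an isomorphism.

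First, I would match cardinalities: since $s$ is separable, $|A[s]| = \deg(s)$, and since $R$ is a commutative order in $K = \QQ(\pi)$ with $V_\ell A$ free of rank one over $K \otimes \QQ_\ell$ (a consequence of $[K:\QQ] = 2g$ together with Tate's theorem), one has $\deg(s) = |N_{K/\QQ}(s)| = [R:Rs]$. Because $R$ is a one-dimensional Gorenstein ring and $s$ is a non-zero-divisor, the quotient $R/Rs$ is a zero-dimensional Gorenstein ring, which decomposes as a product $\prod_{i} S_i$ of local Artinian Gorenstein rings; the $R/Rs$-module $A[s]$ decomposes compatibly as $\prod_i M_i$.

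Next, I would verify that $A[s]$ is faithful over $R/Rs$. If $\alpha \in R$ annihilates $A[s] = \ker(s)$, then $\alpha$ factors through the isogeny $s$, giving $\alpha = \beta \circ s$ for some $\beta \in \End(A)$ defined \emph{a priori} only over $\Fbar_q$. A Galois descent argument, using that $s$ is an epimorphism so may be cancelled on the right, yields $\sigma(\beta) = \beta$ for every $\sigma \in \Gal(\Fbar_q/\FF_q)$, hence $\beta \in R$; commutativity of $R$ then gives $\alpha = \beta s \in Rs$.

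The heart of the proof is the following lemma: over an Artinian local commutative Gorenstein ring $S$, any faithful finitely generated $S$-module $M$ contains a copy of $S$ as a submodule; in particular $\ell_S(M) \ge \ell_S(S)$, with equality if and only if $M \cong S$. The key input is the one-dimensional socle of $S$, generated by some $\epsilon$: faithfulness yields $m \in M$ with $\epsilon m \neq 0$, and if a non-zero $\alpha \in S$ annihilated $m$, then $\alpha S$ would meet the simple socle $(\epsilon)$, producing $\epsilon = \alpha \beta$ and hence the contradiction $\epsilon m = \beta \alpha m = 0$. Applied to each $M_i$, together with the total length identity $\sum_i \ell_{S_i}(M_i) = \sum_i \ell_{S_i}(S_i)$ (from the matched cardinalities), this forces $M_i \cong S_i$ for every $i$, and therefore $A[s] \cong R/Rs$ as $R$-modules. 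I expect this Artinian Gorenstein module lemma to be the main obstacle, since it is the only place the Gorenstein hypothesis is used essentially, through the one-dimensional socle characterization; the remaining steps are cardinality bookkeeping, the local decomposition, and a standard descent argument.
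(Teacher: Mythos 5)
Your proposal is correct and follows essentially the same route as the paper: establish that $A[s]$ is a faithful $R/Rs$-module, decompose $R/Rs$ into local Artinian Gorenstein factors, use the one-dimensional socle to embed a free rank-one submodule, and conclude by the cardinality count $\#A[s] = \deg s = N_{K/\QQ}(s) = \#(R/Rs)$. The only differences are in how sub-steps are justified (Tate's theorem and a Galois-descent argument for faithfulness, where the paper cites the degree formula and the universal property of quotient isogenies), which does not change the argument.
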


In order to prove this proposition, we will follow a strategy that is largely similar to the proof of Theorem \ref{len-thm}.(a) in Lenstra's original paper.  Our approach differs from Lenstra by  working directly with finite local Gorenstein rings, rather than using duality.  Background for Gorenstein rings can be found in Matsumura's book \cite[Chapter 18]{matsumura}.

\begin{lemma} 
\label{lem:submod}
Let $R$ be a Gorenstein domain and $s$ a nonzero element of $R$.  If the quotient $S = R/Rs$ is finite, then every faithful $S$-module $M$ contains a submodule that is free of rank 1 over $S$.
 \end{lemma}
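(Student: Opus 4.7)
The plan is to reduce to the Artinian local case and then exploit the simple socle characteristic of Gorenstein local Artinian rings, which serves as a universal ``test element'' for faithfulness.

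First, I would observe that $s$ is a non-zero-divisor in the domain $R$, so $S = R/Rs$ inherits the Gorenstein property from $R$; being finite, it is Artinian of Krull dimension zero. Any such ring decomposes via its primitive idempotents $e_1, \dots, e_k$ as a finite product $S = \prod_{i=1}^k S_i$ of Artinian local Gorenstein rings. Correspondingly, an $S$-module $M$ splits as $M = \bigoplus_{i=1}^k e_i M$, with each $e_iM$ a module over $S_i$, and a routine computation gives $\Ann_S(M) = \prod_i \Ann_{S_i}(e_iM)$. Hence faithfulness of $M$ over $S$ is equivalent to faithfulness of each $e_iM$ over the corresponding $S_i$. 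If I can produce elements $m_i \in e_iM$ with $\Ann_{S_i}(m_i) = 0$, then $m := \sum_i m_i$ satisfies $\Ann_S(m) = 0$, and the cyclic submodule $Sm \cong S$ is the desired free rank-one submodule.

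In each local factor, I would appeal to the standard characterization (see \cite[Ch.~18]{matsumura}) that an Artinian local ring $(S_i, \mm_i)$ is Gorenstein if and only if its socle $\mathrm{soc}(S_i) = (0 :_{S_i} \mm_i)$ is a simple $S_i$-module, equivalently the unique minimal nonzero ideal of $S_i$. Fix a generator $\sigma_i$ of $\mathrm{soc}(S_i)$. Because $e_iM$ is faithful over $S_i$ and $\sigma_i \ne 0$, we have $\sigma_i \cdot (e_iM) \ne 0$, so some $m_i \in e_iM$ satisfies $\sigma_i m_i \ne 0$, i.e.\ $\sigma_i \notin \Ann_{S_i}(m_i)$. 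Since every nonzero ideal of $S_i$ contains a minimal nonzero ideal, and the socle is the only such, every nonzero ideal contains $\sigma_i$; hence $\Ann_{S_i}(m_i) = 0$, supplying the required element.

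The main obstacle is really just recognizing the right structural fact: that the socle of an Artinian local Gorenstein ring is simple and therefore contained in every nonzero ideal. Once that is in hand, the socle generator $\sigma_i$ functions as a universal annihilator-detector, and the rest is formal. A pleasant byproduct of this strategy is that it imposes no finite-generation hypothesis on $M$ beyond what is forced by the setup, which keeps the lemma flexible for downstream applications.
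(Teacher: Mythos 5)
Your proof is correct and takes essentially the same route as the paper: reduce to the finite local Artinian Gorenstein factors, use the simplicity (one-dimensionality) of the socle, and conclude that a suitable element has zero annihilator since any nonzero annihilator would have to contain the socle generator. The only cosmetic difference is that you invoke the socle-simplicity characterization and the uniqueness of the minimal nonzero ideal where the paper proves by hand, via nilpotency of the maximal ideal, that every nonzero ideal contains a nonzero element killed by $\mm$.
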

  \begin{proof} 
 Notice that $S$ is Gorenstein because $R$ is Gorenstein; see \cite[Exercise 18.1]{matsumura}.  Additionally, the fact that $S$ is finite implies that it is an Artinian ring. In particular, it is canonically isomorphic to a finite product of its localizations $S = S_1\times \dots \times S_r$.   Thus every $S$-module $M$ has the form $M \cong M_1 \times \dots \times M_r$ where $M_i$ is an $S_i$-module for each $1\leq i\leq r$.  This lemma therefore reduces to the following lemma.
  \end{proof}
  
  \begin{lemma} 
  Let $(T, \mm)$ be a finite local Artinian ring that is Gorenstein.
  \begin{enumerate}[(a)]
  \item
	  Every nonzero  ideal $J\subseteq T$ contained in $\mm$ contains a nonzero element that is killed by all elements of $\mm$.
\item 
	Every faithful $T$-module $N$ contains a submodule that is free of rank 1 over $T$.
  \end{enumerate}
   \end{lemma}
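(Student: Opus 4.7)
The plan is to reduce everything to the single ring-theoretic fact that since $(T,\mm)$ is a $0$-dimensional local Gorenstein ring, its socle $\operatorname{Soc}(T) = \Ann_T(\mm)$ is a one-dimensional $T/\mm$-vector space, hence a simple, and therefore unique minimal, nonzero ideal of $T$. Once this is in hand, parts (a) and (b) will follow by elementary manipulations that exploit the nilpotency of $\mm$ in the finite Artinian ring $T$, with part (a) feeding directly into the proof of part (b).

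For (a), I would start with a nonzero ideal $J \subseteq \mm$ and use that $\mm$ is nilpotent in an Artinian local ring to choose the smallest integer $n \geq 1$ with $\mm^n J = 0$. The ideal $\mm^{n-1} J$ is then a nonzero subideal of $J$ that is annihilated by $\mm$, so it is a nonzero subideal of $\operatorname{Soc}(T)$. Since the socle is simple, this forces $\mm^{n-1}J = \operatorname{Soc}(T) \subseteq J$, and in particular every nonzero element of the socle lies in $J$ and is killed by all of $\mm$, which is exactly what was required.

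For (b), I would argue by contradiction: suppose that for every $x \in N$ the annihilator $\Ann_T(x)$ is nonzero. If $x \neq 0$, then $\Ann_T(x)$ is a proper ideal, hence contained in $\mm$, and (a) shows it contains $\operatorname{Soc}(T)$; this yields $\operatorname{Soc}(T) \cdot x = 0$. Since the same conclusion is trivial for $x = 0$, we get $\operatorname{Soc}(T) \cdot N = 0$. Faithfulness of $N$ then forces $\operatorname{Soc}(T) = 0$, contradicting the Gorenstein hypothesis. Hence some $x \in N$ satisfies $\Ann_T(x) = 0$, and the cyclic submodule $Tx \cong T$ is free of rank one over $T$.

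The main obstacle is conceptual rather than technical: the whole proof hinges on deploying the correct characterization of the Gorenstein property for a $0$-dimensional local ring, namely the simplicity of the socle (equivalently, $\dim_{T/\mm} \Ann_T(\mm) = 1$). No injective-hull or Matlis-duality machinery is needed at this stage, so long as part (a) is already available when proving part (b).
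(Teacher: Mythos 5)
Your proposal is correct and follows essentially the same route as the paper: part (a) is an elementary nilpotency argument (yours via the minimal $n$ with $\mm^n J = 0$, the paper's via iterated kernels of multiplication by the finitely many elements of $\mm$), and part (b) rests on the same Gorenstein input, namely that $\Ann_T(\mm)\cong\Hom_T(T/\mm,T)$ is one-dimensional over $T/\mm$, combined with faithfulness to produce an element with zero annihilator. Your packaging of (a) through the simplicity of the socle even yields the slightly stronger containment $\operatorname{Soc}(T)\subseteq J$, which is exactly what your proof of (b) uses, so the argument is complete.
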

    \begin{proof} 
To prove part (a), list the elements of the maximal ideal $\mm = \{a_1,\dots, a_d\}$.  Define $J_0 = J$, and for each $1\leq i\leq d$, let $J_i$ be the set of elements of $J$ which are annihilated by $\{a_1,\dots, a_i\}$.  
In other words, for each $1\leq i \leq d$, the ideal $J_i$ is the kernel of the map $f_i: J_{i - 1}\to J_{i - 1}$ defined by $x\mapsto a_ix$.
All elements of $\mm$ are nilpotent, and therefore the kernel $J_i$ of the map $f_i$ is nontrivial precisely when $J_{i -1}\neq 0$.   Since $J_0 \neq 0$ by hypothesis, it is clear by induction that $J_i \neq 0$ for all $1\leq i\leq d$.  In particular, there are nonzero elements in $J_d\subseteq J$ which are annihilated by every element of $\mm$.
  
   For part (b), let $k = T/\mm$ be the residue field of $T$.  Because $T$ is a zero-dimensional Gorenstein ring, the $k$-vector space $\Ext_T^0(k,T) = \Hom_T(k,T)$ is one-dimensional; see \cite[Theorem~18.1]{matsumura}.  Thus the annihilator of $\mm$ in $T$ is a principal ideal $I = t T$ where $t =\phi(1)$ for some nonzero $\phi: k\to T$.  Because $N$ is a faithful module, there is some $n\in N$ such that $tn\neq 0$. Let $\Ann(n)$ be the annihilator of $n$, which is an ideal contained in $\mm$.  
   
   If $\Ann(n) = 0$, then the submodule $T n \subseteq N$ is free of rank 1 and we are done. If $\Ann(n)\neq 0$, then part (a) implies that $\Ann(n)$ contains a nonzero element $x$ which is killed by all elements of $\mm$.  Since $I$ is the annihilator of $\mm$, this means that $x\in \Ann(n)$ is also a nonzero element of $I$. However, $I$ is a principal ideal that can be viewed as a module over the field $k = T/\mm$, hence every nonzero element of $I$ is a generator.  
In particular, $x n \neq 0$ because $t\in I = xT$ and  $tn\neq 0$. This contradiction completes the proof.
    \end{proof}

 We are now ready to prove the key proposition.
 \begin{proof}[Proof of Proposition {\ref{gor-key-prop}}]
	Put $S = R/Rs$ and $M = A[s]$ for ease of notation.  
	Notice that $M$ is a \emph{faithful} $S$-module: Any $r\in R$ such that $rM = rA[s] = 0$ factors as $r = ts$ for some $t\in R$, i.e. $r\in Rs$.  Indeed, this follows immediately from the universal property of quotients; see \cite[Remark 7.(c)]{kani}.
	
	Therefore, Lemma \ref{lem:submod} implies that $M$ contains a free $S$-submodule of rank 1.  Now, we can count the cardinalities of these sets:
	$$
		\#M = \deg s = N_{K/\QQ}s = \#R/Rs = \# S.
	$$
	The first equality comes from the separability of $s$, and the second equality above is a well-known theorem \cite[Proposition V.12.12]{milne}.  Therefore, $M\cong S$ as an $S$-module because their cardinalities are the same. This proves Proposition \ref{gor-key-prop}.
 \end{proof}

 %
 %Generalization #2
 %
\section{Using Kernel Ideals}
\label{sec:ker}

 In this section, $A$ is a simple abelian variety over $ \FF_q$ with Frobenius endomorphism $\pi$.  Then the endomorphism algebra $D = \End_{\FF_q}(A)\otimes \QQ$ is a division algebra with center $K = \QQ(\pi)$ \cite[Theorem 8]{wm}. Write $R = \End_{\FF_q}(A)$, and let $Z$ be the center of the endomorphism ring.  Our goal in this section is to prove Proposition \ref{inv-key-prop}, which we repeat below for convenience.

\begin{prop} 
\label{inv-key-prop}
 If $s$ is a separable element of $Z$ for which $sZ$ is the product of invertible prime ideals in $Z$, then  there is an isomorphism of $Z$-modules
	$$
		A[s] \cong (Z/Zs)^d
	$$
where $d = 2g/[\QQ(\pi):\QQ]$.
Moreover, this $Z$-module has exactly one $R$-module structure, up to isomorphism.  This $R$-module structure comes from the isomorphism of rings ${R/Rs \cong \MM_d(Z/Zs)}$, and there is an isomorphism
$$
	A[s]^{d} \cong R/Rs
$$
as $R$-modules.
 \end{prop}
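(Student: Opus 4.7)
The plan is to combine the theory of kernel ideals with Morita equivalence, following Lenstra's treatment of the supersingular elliptic curve case as a model.

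First I would reduce to the case of a single prime power. Writing $sZ = \prod_i \pp_i^{e_i}$, the Chinese Remainder Theorem gives $Z/Zs \cong \prod_i Z/\pp_i^{e_i}$; centrality of $s$ yields the parallel ring decomposition $R/Rs \cong \prod_i R/R\pp_i^{e_i}$, and separability of $s$ gives $A[s] \cong \bigoplus_i A[\pp_i^{e_i}]$. Hence it suffices to treat the case $sZ = \pp^e$ for a single invertible maximal ideal $\pp \subset Z$; then $Z_\pp$ is a DVR and $Z/Zs$ is a local Artinian principal ideal ring.

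The main obstacle is establishing the ring isomorphism $R/Rs \cong \MM_d(Z/Zs)$. Because $s$ is separable, $\deg s$ is prime to the residue characteristic $p$ of $\FF_q$, so $\pp$ does not lie above $p$. By Tate's theorem on the local invariants of the endomorphism algebra $D = R\otimes\QQ$, the division algebra $D$ splits at every finite prime of $K = Z\otimes\QQ$ not lying above $p$, so $D\otimes_K K_\pp \cong \MM_d(K_\pp)$. Using the invertibility of $\pp$ in $Z$ together with the theory of kernel ideals, one shows that $R\otimes_Z Z_\pp$ is the maximal order $\MM_d(Z_\pp)$ inside this matrix algebra, and reducing modulo $\pp^e$ yields the claimed ring isomorphism. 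This maximality step is the most delicate part of the argument.

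With the ring isomorphism in hand, the remaining claims follow from Morita equivalence between $R/Rs$ and its center $Z/Zs$. Every $R/Rs$-module is of the form $N^d$ for a $Z/Zs$-module $N$ unique up to isomorphism. Applied to $A[s]$, which is faithful over $R/Rs$ by the universal property of quotients (exactly as in the proof of Proposition \ref{gor-key-prop}), this yields $A[s] \cong N^d$ with $N$ a faithful $Z/Zs$-module of cardinality $|N|^d = |A[s]| = \deg s = |Z/Zs|^d$, so $|N| = |Z/Zs|$. Over a local Artinian principal ideal ring, the only faithful module of that size is $Z/Zs$ itself, giving $A[s] \cong (Z/Zs)^d$. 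Uniqueness of the $R$-module structure on $(Z/Zs)^d$ then reduces under Morita to Krull--Schmidt for $Z/Zs$-modules, and the final isomorphism $A[s]^d \cong R/Rs$ comes from the fact that under Morita, $A[s]^d$ corresponds to $(Z/Zs)^d$, which is exactly the $Z/Zs$-module associated to the free $R/Rs$-module of rank one.
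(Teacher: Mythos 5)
Your overall architecture (CRT reduction to one prime, then a ring isomorphism $R/Rs\cong\MM_d(Z/Zs)$, then Morita) is coherent, but the step you yourself flag as ``the most delicate part'' --- that $R\otimes_Z Z_\pp$ is a maximal order $\MM_d(Z_\pp)$ in $D\otimes_K K_\pp$ --- is precisely the crux of the proposition, and you assert it rather than prove it. Invertibility of $\pp$ only makes the center $Z_\pp$ a discrete valuation ring; an order whose center is a DVR need not be maximal (e.g.\ $\ZZ_\ell+\ell\MM_d(\ZZ_\ell)$), so one needs an input specific to endomorphism rings of abelian varieties, for instance Tate's isomorphism $R\otimes\ZZ_\ell\cong\End_{\ZZ_\ell[\pi]}(T_\ell A)$ together with freeness of the $\pp$-component of the Tate module over $Z_\pp$; ``the theory of kernel ideals'' does not obviously deliver this, and no argument is given. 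Note that the paper proceeds in the opposite direction: it first proves $|A[I_0]|=N_{K/\QQ}(I_0)^{d}$ for every invertible ideal $I_0\subseteq Z$ (via kernel ideals, the right-order computation $\End_{\FF_q}(A/A[I_0])=R$, and an auxiliary ideal chosen in the class $[I_0]^{-1}\in\Pic(Z)$ by a density argument), deduces $A[\pp^r]\cong(Z/\pp^r)^{d}$ from the structure theorem over the DVR $Z_\pp$, and only then obtains $R/Rs\cong\MM_d(Z/Zs)$ from faithfulness plus a cardinality count; the local maximality you want to use as an input is there a consequence of the result, not a tool for proving it.

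There is a second, concrete error: separability of $s$ does \emph{not} imply that $\deg s$ is prime to $p$, so you cannot conclude that $\pp$ lies away from $p$. Verschiebung on an ordinary elliptic curve is separable of degree $p$, and in the intended application $s=\pi^n-1$ has degree $\#A(\FF_{q^n})$, which is divisible by $p$ as soon as $A$ has positive $p$-rank and $n$ is suitable; such primes of $Z$ above $p$ (those at which $\pi$ is a unit, corresponding to the \'etale part of $A[p^\infty]$) are allowed by the hypotheses and must be handled. Your appeal to splitting of $D$ ``at finite primes not above $p$'' and any implicit $\ell$-adic Tate-module argument does not cover them; one would need the local invariant formula at places over $p$ (where $v(\pi)=0$ still gives splitting) and Tate's theorem for $p$-divisible groups, which again you do not supply. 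The paper's kernel-ideal argument treats all invertible primes uniformly and avoids this dichotomy. The remaining steps of your proposal (the CRT decomposition, faithfulness, the count $|A[s]|=\deg s=|Z/Zs|^{d}$, and the Morita/uniqueness argument) are fine once the matrix-ring isomorphism is actually established.
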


To prove this proposition, we will inspect the isogenies associated to (left) ideals, inspired by Waterhouse \cite{waterhouse}; see also \cite[\S2]{kani} for additional background. In the construction of Waterhouse, a nonzero ideal $I\subseteq R$ is associated to an isogeny whose kernel is $A[I] = \cap_{\alpha\in I} A[\alpha]$, where $A[\alpha]$ is the kernel of the endomorphism $\alpha$.  In other words, if $I$ is generated by the elements $\alpha_1,\dots, \alpha_m$, then the abelian variety $A/A[I]$ is isomorphic to the image of the map $(\alpha_1,\dots, \alpha_m): A \to A^m$.

Similarly, we can also associate a finite subgroup scheme $H$ of $A$ to a left ideal $I(H) \subseteq R$, given by
$$
	I(H) = \{\alpha \in R : H \subseteq A[\alpha]\}.
$$
Given a nonzero ideal $I\subseteq R$, we always have $I \subseteq I(A[I])$.  If equality holds, then $I$ is called a \emph{kernel ideal}.  Every nonzero ideal $I$ is contained in a kernel ideal $J$ such that $A[I] = A[J]$.

For our purposes, we will be concerned with isogenies that are associated to ideals contained in the center $I_0\subseteq Z$.  For convenience, we will write $A[I_0]$ in place of $A[I_0R]$.   The goal of this section is to describe $A[s]$ in terms of $A[\pp_j^{e_j}]$ where  $sZ = \pp_1^{e_1}\dots \pp_r^{e_r}$ is the factorization of $s$ into invertible prime ideals in $Z$, which will allow us to prove Proposition \ref{inv-key-prop}.

\subsection{Basics of invertible ideals}
 First, we recall some basic key properties about invertible ideals in algebraic number theory.  Within this section, let $L$ denote a number field and let $\OO\subseteq L$ be an order. The \emph{conductor ideal} of $\OO$ is defined to be $\ff_\OO = \{a\in L : a\OO_L \subseteq \OO \}$. The following  lemmas  show the connection between the conductor ideal and the invertibility of ideals.

\begin{lemma} 
\label{lem:inv_dvr}
If $\pp\subseteq \OO$ is a nonzero prime ideal, then the following are equivalent:
\begin{enumerate}
	\item $\pp$ is invertible, i.e. $\pp I = a\OO$ for some ideal $I\subseteq \OO$ and some $a\in \OO$;
	\item $\pp$ is regular, i.e. the localization $\OO_\pp$ is integrally closed;
	\item $\pp$ is coprime to the conductor ideal $\ff_\OO $, i.e. $\pp + \ff_\OO = \OO$.
\end{enumerate}
Moreover, when these equivalent conditions hold, the localization $\OO_\pp$ is a discrete valuation ring.
 \end{lemma}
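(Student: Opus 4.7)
The plan is to prove all three conditions are equivalent to $\OO_\pp$ being a discrete valuation ring, which simultaneously establishes the ``moreover'' clause. Three structural facts will drive the argument: (i) an order $\OO$ in a number field is a one-dimensional Noetherian domain, so every nonzero prime is maximal; (ii) $\OO_L$ is a finitely generated $\OO$-module; and (iii) a one-dimensional Noetherian local domain is a DVR if and only if it is integrally closed, if and only if its maximal ideal is principal.

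For (1) $\Leftrightarrow$ DVR, I would invoke that an invertible ideal is projective, hence locally principal, and conversely that a finitely generated ideal which is principal at every maximal localization is invertible. Since $\pp$ is height one and distinct from every other maximal ideal $\qq$, we have $\pp\OO_\qq = \OO_\qq$ at such $\qq$, so the invertibility of $\pp$ reduces to $\pp\OO_\pp$ being principal in $\OO_\pp$; combined with fact (iii), this is equivalent to $\OO_\pp$ being a DVR. The equivalence (2) $\Leftrightarrow$ DVR is just the content of fact (iii).

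The most interesting direction is (3) $\Leftrightarrow$ DVR, and I plan to route it through the intermediate identity $\OO_\pp = (\OO_L)_\pp$. For (3) $\Rightarrow$ DVR, a witness $t \in \ff_\OO \setminus \pp$ satisfies $t\OO_L \subseteq \OO$, so $\OO_L \subseteq t^{-1}\OO \subseteq \OO_\pp$, forcing $\OO_\pp = (\OO_L)_\pp$; this localization of the Dedekind domain $\OO_L$ at the multiplicative set $\OO \setminus \pp$ is the intersection of the (finitely many) DVRs obtained by localizing at the primes of $\OO_L$ above $\pp$, and hence is integrally closed. For the converse, if $\OO_\pp$ is integrally closed, then $(\OO_L)_\pp$ is a finitely generated integral extension of $\OO_\pp$ inside $L$, so it collapses to $\OO_\pp$; clearing denominators in a finite set of $\OO$-module generators of $\OO_L$ produces a single $s \in \OO \setminus \pp$ with $s\OO_L \subseteq \OO$, i.e., $s \in \ff_\OO \setminus \pp$, and maximality of $\pp$ then upgrades this to $\pp + \ff_\OO = \OO$.

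The main obstacle is really just the bookkeeping in the (3) $\Leftrightarrow$ DVR step: one must use that $\OO_L$ is finitely generated over $\OO$ in both directions in order to move between the conductor $\ff_\OO$, the local ring $\OO_\pp$, and the integral closure $\OO_L$ in a consistent way. Once this dictionary is set up, the rest is formal commutative algebra.
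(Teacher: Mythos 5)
Your argument is correct, and it takes a genuinely different route from the paper. The paper's proof is essentially a citation collage from Neukirch: invertible $\Leftrightarrow$ regular (Exercise I.12.5), regular $\Leftrightarrow$ $\pp \not\supseteq \ff_\OO$ (Proposition I.12.10), then one-dimensionality of $\OO$ upgrades non-containment to $\pp + \ff_\OO = \OO$; the final clause comes from the identification $\OO_\pp = \OO_{L,\hat\pp}$ (again Proposition I.12.10) together with the standard fact that localizations of $\OO_L$ at nonzero primes are discrete valuation rings. You instead make ``$\OO_\pp$ is a DVR'' the hub and prove each equivalence by hand: (1) via invertible $\Leftrightarrow$ locally principal (legitimate here since $\OO$ is Noetherian, and distinct maximal ideals localize $\pp$ to the unit ideal), (2) directly from the characterization of one-dimensional Noetherian local domains, and (3) by establishing $\OO_\pp = (\OO_L)_\pp$ in both directions, using a conductor element prime to $\pp$ one way and finite generation of $\OO_L$ over $\OO$ to clear denominators the other way --- which in effect reproves the content of the cited Neukirch propositions rather than quoting them. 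What each approach buys: the paper's version is two lines and delegates all the commutative algebra to the reference, while yours is self-contained, makes explicit exactly which structural inputs are used (Noetherianity, maximality of nonzero primes, finiteness of $\OO_L$ over $\OO$), and delivers the ``moreover'' clause for free since the DVR property is the pivot of every equivalence. One small simplification available to you: in the direction (3) $\Rightarrow$ DVR you can replace the intersection-of-DVRs argument by the one-line fact that any localization of the integrally closed domain $\OO_L$ is integrally closed.
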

  \begin{proof} 
The prime ideal $\pp$ is invertible if and only if it is regular by \cite[Exercise I.12.5]{Neukirch}, which is true if and only if $\pp\not\supseteq \ff_\OO$  \cite[Proposition 12.10]{Neukirch}. To obtain the last equivalent condition, observe that $\OO$ is a one-dimensional Noetherian integral domain \cite[Proposition I.12.2]{Neukirch}, so any nonzero prime ideal of $\OO$ is maximal. In particular, $\pp \not\supseteq \ff_\OO$ is equivalent to $\pp + \ff_\OO = \OO$.  

Finally, if $\pp$ is regular, then the localization  $\OO_\pp$ is equal to the localization of the ring of integers $\OO_L$ at the prime ideal $\hat{\pp} = \pp\OO_L$ \cite[Proposition 12.10]{Neukirch}, and the latter localization $\OO_{L, \hat{\pp}}$ is known to be a discrete valuation ring \cite[Proposition I.11.5]{Neukirch}.
  \end{proof}
  
  While the preceding lemma focuses on prime ideals, the following result shows the connection between invertibility and the conductor ideal in general.  In particular, we see that Proposition \ref{inv-key-prop} can be rephrased to require that $sZ$ is coprime to the conductor ideal $\ff_Z$ of $Z$ instead of requiring that $sZ$ is the product of invertible ideals.

\begin{lemma}[Proposition 3.2,\cite{LD}]
\label{lem:unique-fac}
If $\aa\subseteq \OO$ is any ideal coprime to the conductor $\ff_\OO$, then $\aa$ is invertible and is uniquely factored into (invertible) prime ideals.
 \end{lemma}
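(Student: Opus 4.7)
The plan is to reduce the statement to the unique factorization theorem for the Dedekind domain $\OO_L$ via an extension-contraction correspondence that holds for ideals coprime to the conductor $\ff_\OO$. The key observation is that on the class of ideals coprime to $\ff_\OO$, the order $\OO$ behaves essentially like its integral closure $\OO_L$.

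First I would establish the bijection between ideals $\aa\subseteq\OO$ coprime to $\ff_\OO$ and ideals $\mathfrak{A}\subseteq\OO_L$ coprime to $\ff_\OO$, given by $\aa\mapsto \aa\OO_L$ with inverse $\mathfrak{A}\mapsto \mathfrak{A}\cap\OO$.  The nontrivial identity to check is $\aa = \aa\OO_L\cap\OO$: writing $1 = x + y$ with $x\in\aa$ and $y\in\ff_\OO$ using the coprimality hypothesis, for any $z\in\aa\OO_L\cap\OO$ one has $yz\in\ff_\OO\cdot\OO_L\subseteq\OO$, and multiplying $z = (x+y)z = xz + yz$ exhibits $z$ as a sum of an element of $\aa$ and an element of $yz\cdot\OO\subseteq\ff_\OO\cdot\aa\OO_L\subseteq\aa$. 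Compatibility of this bijection with products follows from the standard identities $(\aa\bb)\OO_L = (\aa\OO_L)(\bb\OO_L)$ and the fact that coprimality to $\ff_\OO$ is preserved by products.

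Next, I would invoke unique factorization in the Dedekind domain $\OO_L$ to write $\aa\OO_L = \mathfrak{P}_1^{e_1}\cdots\mathfrak{P}_r^{e_r}$. Because $\aa\OO_L$ is coprime to $\ff_\OO$, so is each prime $\mathfrak{P}_i$, and hence so is its contraction $\pp_i = \mathfrak{P}_i\cap\OO$.  Lemma \ref{lem:inv_dvr} then guarantees that each $\pp_i$ is invertible in $\OO$. Pulling the $\OO_L$-factorization back through the bijection yields $\aa = \pp_1^{e_1}\cdots\pp_r^{e_r}$, which is a product of invertible ideals of $\OO$ and is therefore itself invertible.

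Finally, uniqueness is transported back from $\OO_L$: any competing factorization $\aa = \qq_1^{f_1}\cdots\qq_s^{f_s}$ into invertible prime ideals of $\OO$ must consist of primes coprime to $\ff_\OO$ (since the product is), and extending via $\qq\mapsto \qq\OO_L$ gives a prime factorization of $\aa\OO_L$ in the Dedekind domain $\OO_L$; unique factorization there forces $r = s$ and a matching of primes after reordering, which descends under contraction. The main obstacle in this argument is the careful verification that extension and contraction are mutually inverse on ideals coprime to $\ff_\OO$ and interact correctly with products; once that foundational correspondence is in place, invertibility and unique factorization follow formally from the Dedekind-domain structure of $\OO_L$.
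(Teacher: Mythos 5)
The paper does not actually prove this lemma: it is imported as Proposition 3.2 of \cite{LD}, so there is no internal argument to compare yours against. Your proof is the standard one (in substance the argument of \cite{LD}, and of Cox's treatment of quadratic orders): reduce to unique factorization in the Dedekind domain $\OO_L$ via the extension/contraction correspondence on ideals coprime to the conductor, with invertibility of the contracted primes supplied by Lemma \ref{lem:inv_dvr}. The outline and the computations you do give (in particular the $1 = x+y$ argument for $\aa = \aa\OO_L\cap\OO$, using $\ff_\OO\OO_L = \ff_\OO\subseteq\OO$) are correct.

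Two steps are used but left implicit and should be supplied. First, you verify only one of the two identities making extension and contraction mutually inverse; pulling the factorization $\aa\OO_L = \mathfrak{P}_1^{e_1}\cdots\mathfrak{P}_r^{e_r}$ back to $\aa = \pp_1^{e_1}\cdots\pp_r^{e_r}$ also requires $\pp_i\OO_L = \mathfrak{P}_i$, i.e.\ the identity $(\mathfrak{A}\cap\OO)\OO_L = \mathfrak{A}$ for $\OO_L$-ideals $\mathfrak{A}$ coprime to $\ff_\OO$. It follows by the same trick: write $1 = x + y$ with $x\in\mathfrak{A}$, $y\in\ff_\OO$, observe $x = 1-y\in\OO$ so $x\in\mathfrak{A}\cap\OO$, and then for $a\in\mathfrak{A}$ decompose $a = ax + ay$ with $ax\in(\mathfrak{A}\cap\OO)\OO_L$ and $ay\in\mathfrak{A}\ff_\OO\subseteq\mathfrak{A}\cap\OO$. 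Second, in the uniqueness step you silently use that $\qq\OO_L$ is a \emph{prime} ideal of $\OO_L$ when $\qq$ is an invertible prime of $\OO$; this is exactly the prime-ideal bijection of Lemma 3.3 of \cite{LD} that the paper invokes in Lemma \ref{lem:choose_inverse}, or it can be checked directly from $\OO_\qq = (\OO_L)_{\qq\OO_L}$ as in the proof of Lemma \ref{lem:inv_dvr}. With these two observations added, your argument is complete and agrees with the cited source's proof.
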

 
Recall that the \emph{Picard group} $\Pic(\OO)$ is defined to be the quotient of the set of invertible fractional ideals of $\OO$ by the set of principal fractional ideals.  We refer readers to \cite[\S I.12]{Neukirch} and \cite{LD} for additional background. 

 \begin{lemma} 
Every class of ideals in $\Pic(\OO)$ contains infinitely many prime ideals.
   \label{lem:choose_inverse}
  \end{lemma}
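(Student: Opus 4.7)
The plan is to reduce the statement to the classical fact, a consequence of Chebotarev's density theorem, that every class in the ray class group $\Cl_{\ff_\OO}(L)$ of $\OO_L$ with modulus $\ff_\OO$ contains infinitely many prime ideals of $\OO_L$. The bridge is a natural surjection $\Cl_{\ff_\OO}(L) \twoheadrightarrow \Pic(\OO)$.

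First, I would establish this surjection. By Lemmas \ref{lem:inv_dvr} and \ref{lem:unique-fac}, the invertible fractional ideals of $\OO$ are precisely those coprime to $\ff_\OO$, and the extension map $I \mapsto I\OO_L$ defines a group isomorphism from this group to the group $I_{\ff_\OO}(\OO_L)$ of fractional ideals of $\OO_L$ coprime to $\ff_\OO$, with inverse given by contraction $\hat I \mapsto \hat I \cap \OO$. Under this identification, the principal fractional ideals of $\OO$ correspond to a subgroup of $I_{\ff_\OO}(\OO_L)$ containing $\{(\alpha)\OO_L : \alpha \equiv 1 \pmod{\ff_\OO}\}$, since such an $\alpha$ lies in $1 + \ff_\OO \subseteq \OO$ and is a unit modulo $\ff_\OO$, so $(\alpha)\OO_L$ is the extension of the principal ideal $\alpha\OO$ of $\OO$. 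This yields a well-defined surjection $\Cl_{\ff_\OO}(L) \twoheadrightarrow \Pic(\OO)$ sending $[\hat I] \mapsto [\hat I \cap \OO]$.

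Finally, given a class $\mathcal{C} \in \Pic(\OO)$, lift it to some class $\tilde{\mathcal{C}} \in \Cl_{\ff_\OO}(L)$ via this surjection. By Chebotarev's density theorem applied to the corresponding ray class field extension of $L$, the class $\tilde{\mathcal{C}}$ contains infinitely many distinct prime ideals $\hat\pp_1, \hat\pp_2, \ldots$ of $\OO_L$ coprime to $\ff_\OO$. Each contraction $\pp_i = \hat\pp_i \cap \OO$ is then an invertible prime ideal of $\OO$ whose class in $\Pic(\OO)$ equals $\mathcal{C}$; the $\pp_i$ are pairwise distinct since extension and contraction are mutually inverse operations on ideals coprime to $\ff_\OO$. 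The main obstacle is the careful bookkeeping required to verify that the contraction descends to a well-defined surjection $\Cl_{\ff_\OO}(L) \twoheadrightarrow \Pic(\OO)$; the analytic input from Chebotarev is entirely standard and could alternatively be invoked as a generalization of Dirichlet's theorem on primes in arithmetic progressions.
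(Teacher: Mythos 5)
Your overall route is the same as the paper's: transfer the question across the conductor via extension and contraction of ideals, and then invoke a density theorem for ideals of $\OO_L$. The paper applies the generalized Dirichlet density theorem \cite[Theorem VII.13.2]{Neukirch} directly to the congruence-subgroup presentation of $\Pic(\OO)$ furnished by \cite[Lemma 3.3, Theorem 3.11]{LD}, whereas you pass through the full ray class group $\Cl_{\ff_\OO}(L)$ and push forward along a surjection; this is only a difference in bookkeeping, not in substance.

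There is, however, a flaw in the step on which your surjection rests: it is not true that the invertible fractional ideals of $\OO$ are precisely those coprime to $\ff_\OO$. Coprimality to the conductor implies invertibility (Lemma \ref{lem:unique-fac}), and for \emph{prime} ideals the converse holds (Lemma \ref{lem:inv_dvr}), but a general invertible fractional ideal need not be coprime to $\ff_\OO$: whenever $\OO \neq \OO_L$, any nonzero $a \in \ff_\OO$ yields a principal, hence invertible, ideal $a\OO$ with $a\OO + \ff_\OO \subseteq \ff_\OO \neq \OO$. Consequently the surjectivity of your map $\Cl_{\ff_\OO}(L) \to \Pic(\OO)$, which you need in order to lift an arbitrary class $\mathcal{C}$, is not justified as written. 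What is actually required is the standard but nontrivial fact that every class in $\Pic(\OO)$ is represented by an invertible ideal coprime to $\ff_\OO$, so that $\Pic(\OO)$ is realized as the group of ideals prime to the conductor modulo the appropriate subgroup of principal ideals; this is exactly what the paper extracts from \cite[Theorem 3.11]{LD}, and it can alternatively be proved by an approximation (moving) argument. Once that fact is supplied, the remainder of your argument --- Chebotarev/Dirichlet in the ray class group, contraction of the resulting primes, well-definedness of the map on classes via $1 + \ff_\OO \subseteq \OO$, and distinctness of the contracted primes via the bijection --- is correct and matches the paper's proof.
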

   \begin{proof} 
   The extension and contraction of ideals provides a natural bijection between the set of invertible prime ideals of $\OO$ and the set of prime ideals of $\OO_L$ which are coprime to the conductor ideal $\ff_\OO$ \cite[Lemma 3.3]{LD}.  Using this bijection, there is a natural isomorphism of groups that allows us to interpret the Picard group $\Pic(\OO)$ in terms of fractional ideals of $\OO_L$ which are coprime to the ideal $\ff_\OO$ \cite[Theorem 3.11]{LD}. This reduces the claim to a question concerning ideals in $\OO_L$, and a generalization of the Dirichlet density theorem immediately shows that there are infinitely many suitable prime ideals \cite[Theorem VII.13.2]{Neukirch}.
   \end{proof}

  \subsection{Isogenies associated to ideals}
  Now we focus our attention on the invertible ideals of the center $Z$ of the endomorphism ring $R$, and investigate the corresponding isogenies.

\begin{lemma}
\label{kernel-ideal-lemma}
If $I_0\subseteq Z$ is an invertible ideal, then $I_0R$ is an invertible two-sided ideal of $R$.  In particular, $I_0R$ is a kernel ideal.
 \end{lemma}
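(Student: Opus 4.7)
The plan is to treat the two assertions separately: invertibility of $I_0 R$ as a two-sided ideal is a formal consequence of the centrality of $Z$, while the kernel ideal property is the standard output of Waterhouse's ideal/isogeny dictionary applied to invertible ideals.

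For invertibility, the first step is to choose a fractional $Z$-ideal $J_0 \subseteq K$ with $I_0 J_0 = Z$, which is available because $I_0$ is invertible in $Z$. Since every element of $Z$ commutes with every element of $R$, the sets $I_0 R = R I_0$ and $J_0 R = R J_0$ are genuine two-sided $R$-bimodules in $D$, and the same centrality lets one rearrange products:
$$
(I_0 R)(J_0 R) \;=\; I_0 J_0 R \;=\; Z R \;=\; R,
$$
with the symmetric equality $(J_0 R)(I_0 R) = R$ holding by the same reasoning. Thus $I_0 R$ is a two-sided invertible ideal of $R$ with two-sided inverse $J_0 R$.

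For the kernel ideal property, the plan is to invoke the general fact, established by Waterhouse \cite{waterhouse} and reviewed in Kani \cite{kani}, that any invertible two-sided ideal of the endomorphism ring is automatically a kernel ideal; specializing that statement to $I_0 R$ concludes the proof. If a self-contained argument is preferred, write $I = I_0 R$ and $J = J_0 R$, take $\alpha \in I(A[I])$, and factor it through the quotient isogeny $\phi_I : A \to A/A[I]$ as $\alpha = \tilde\alpha \circ \phi_I$; expanding $1 \in R = IJ$ as $1 = \sum b_k c_k$ with $b_k \in I$ and $c_k \in J$ then rewrites $\alpha = \sum b_k (c_k \alpha)$ with each $c_k \alpha \in R$, forcing $\alpha \in I$. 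The main obstacle, and the step where invertibility of $I$ is genuinely used rather than just the centrality of $Z$, is verifying that $J \cdot I(A[I]) \subseteq R$ — i.e., that multiplying an endomorphism killing $A[I]$ by an element of the fractional inverse $J$ lands back inside $R$ and not merely $D$. This is precisely the content of Waterhouse's argument, so citing his result streamlines the presentation.
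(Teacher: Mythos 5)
Your first half is fine: the argument that $I_0R$ is two-sided and invertible, with inverse $J_0R$ coming from a fractional ideal $J_0$ satisfying $I_0J_0 = Z$ and the centrality of $I_0$, is exactly what the paper does (the paper additionally records the uniqueness of the two-sided inverse and the identification $(R : I_0R) = (I_0R)^{-1}$, because it needs these later in the proof).

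The kernel-ideal half, however, has a genuine gap. Your primary route is to cite ``the general fact, established by Waterhouse and reviewed in Kani, that any invertible two-sided ideal of the endomorphism ring is automatically a kernel ideal.'' No such general statement is available in those sources: Waterhouse proves that \emph{all} ideals are kernel ideals only under the hypothesis that $\End_{\FF_q}(A)$ is a \emph{maximal} order (his Theorem 3.15), and Kani's Remark 7.(d) gives only the one containment $I(A[I]) \subseteq \bigcap_{Rf \supseteq I} Rf$. Indeed, if the fact you want to cite were already in the literature in this generality, the lemma would be a one-line citation; the whole point of the paper's proof is to supply the missing argument. Your fallback ``self-contained'' sketch concedes the same point: writing $1 = \sum b_k c_k$ with $b_k \in I$, $c_k \in J$ reduces everything to the inclusion $J \cdot I(A[I]) \subseteq R$, which you explicitly leave unproven and again defer to Waterhouse. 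That inclusion is precisely the crux, and it is what the paper actually establishes: it invokes Kani's containment $I(A[I_0R]) \subseteq \bigcap_{Rf \supseteq I_0R} Rf$ (intersection over $f \in D$) and then computes, using that $I_0$ is central, that $D$ is a division algebra (so one may pass from $I_0 y \subseteq R$ to $I_0R \subseteq Ry^{-1}$), and that $I_0R$ is invertible, that
$$
\bigcap_{Rf \supseteq I_0R} Rf \;=\; (R : (R : I_0R)) \;=\; \bigl((I_0R)^{-1}\bigr)^{-1} \;=\; I_0R,
$$
whence $I_0R \subseteq I(A[I_0R]) \subseteq I_0R$. Equivalently, in your notation, the containment $I(A[I]) \subseteq (R : J)$ is exactly what closes your sum argument, but it has to be proved, not cited. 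Without this step (or an equivalent geometric argument), your proposal does not establish the kernel-ideal claim.
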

  \begin{proof} 
Clearly $I_0R$ is naturally a right ideal, and $RI_0$ is naturally a left ideal, and these two sets are equal as $I_0\subseteq Z$ is in the center. Thus, $I_0R$ is a two-sided ideal.
  
 Because $I_0$ is invertible, there is a fractional ideal $J_0$ of $Z$  such that $I_0J_0 = Z$.  Since $Z$ is the center of $R$, it also follows that 
 $$
 	(I_0R)(J_0R) = (J_0R)(I_0R) = R.
$$  Moreover, if $J$ is any fractional two-sided ideal of $R$ such that $J\cdot(I_0R) = (I_0R)\cdot J = R$, then $J_0R = (J_0R)(I_0R)J = J$. This proves that $J_0R$ is the unique two-sided fractional ideal of $R$ with this property, which we denote $(I_0R)^{-1}$. It follows immediately from uniqueness that $((I_0R)^{-1})^{-1} = I_0R$.
 
 Now for any ideal $I$ of $R$,  define 
$(R : I) = \{x\in D : xI\subseteq R\}$.
  Then we have
  \begin{align*}
  	(R : I_0R)
		&=\{x\in D : xI_0\subseteq R\}
		=\{x\in D : I_0x\subseteq R\}
\end{align*}
because $xI_0R\subseteq R$ if and only if $xI_0\subseteq R$, and $xI_0 = I_0x$ for all $x\in D$ because $I_0$ is contained in the center $Z$.  In particular, $(R : I_0R)$ is a two-sided fractional ideal and it is easy to verify that $(R : I_0R) = (I_0R)^{-1}$.  Indeed, the containments
$$
	R\supseteq (R: I_0R)\cdot I_0R \supseteq (I_0R)^{-1}\cdot (I_0R) = R
$$
show that $(R: I_0R)\cdot I_0R = R$, and similarly $I_0R \cdot (R: I_0R) = R$.
   Therefore, we have
  $$
  	(R : (R : I_0R)) = ((I_0R)^{-1})^{-1} = I_0R.
  $$
  
By \cite[Remark 7.(d)]{kani}, we know that
  $$
  	I(A[I_0R]) \subseteq \bigcap_{Rf\supseteq I_0}Rf
  $$
  where the intersection is taken over all elements $f\in D$.

  A routine verification shows that
 \begin{align*}
 	(R : (R: I_0R)) 
		&= \{x\in D : x\cdot (R: I_0R)  \subseteq R\}\\
		&= \{x\in D : \forall y\in D, \text{ if } I_0y \subseteq R,
			 \ \text{ then } xy \in R \}\\
		&= \{x\in D : \forall y\in D \setminus\{0\}, \text{ if } I_0 \subseteq Ry^{-1},
			 \ \text{ then } x \in Ry^{-1} \}\\
		&=\bigcap_{Ry^{-1} \supseteq I_0R}\{x\in D : x\in Ry^{-1}\}\\
		&=\bigcap_{Ry^{-1} \supseteq I_0R} Ry^{-1}\\
		&=\bigcap_{Rf \supseteq I_0R} Rf
 \end{align*}
where the final equality comes from simply reindexing the intersection with $f = y^{-1}$.

Combining all of the containments above, we see that  
$$
  	I_0R \subseteq I(A[I_0R]) \subseteq \bigcap_{Rf\supseteq I}Rf = (R: (R : I_0R)) = I_0R
  $$ 
  which shows that $I_0R$ is a kernel ideal by definition.
  \end{proof}
  
The lemma above is useful because it shows that the prime ideals appearing in Proposition \ref{inv-key-prop} are actually kernel ideals, which gives us the following important information.  We will write $|H|$ for the rank of a finite subgroup scheme $H$ of $A$, or equivalently, the degree of the isogeny $\pi_H: A \to A/H$.

  \begin{prop} 
  \label{prop:count}
  If $I_0\subseteq Z$ is an invertible ideal, then 
  $$
  	\End_{\FF_q}(A/A[I_0]) = \End_{\FF_q}(A) = R.
$$
 Moreover,
 $$
 	|A[I_0]| = N_{K/\QQ}(I_0)^{2g/[K:\QQ]}.
$$ 
   \end{prop}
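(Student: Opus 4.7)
The proposition has two assertions about $B := A/A[I_0]$: (1) $\End_{\FF_q}(B) = R$, and (2) $|A[I_0]| = N_{K/\QQ}(I_0)^d$ where $d = 2g/[K:\QQ]$.

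For (1), my plan is to identify both endomorphism rings as lattice stabilizers inside $D$ via an $\ell$-adic argument. For each prime $\ell \neq p$, isogenous abelian varieties share $V_\ell(\cdot)$ and the $D$-action on it; their endomorphism rings are distinguished only by the lattice each one stabilizes. Because $I_{0,\ell}$ is invertible in $Z_\ell$, the finite subgroup $A[I_0][\ell^\infty]$ is identified with $I_{0,\ell}^{-1} T_\ell(A)/T_\ell(A)$, so $T_\ell(B) = I_0^{-1} T_\ell(A)$. Since $I_0 \subseteq Z$ is central, for any $f \in D$ the condition $f \cdot I_0^{-1} T_\ell(A) \subseteq I_0^{-1} T_\ell(A)$ is equivalent to $f \cdot T_\ell(A) \subseteq T_\ell(A)$. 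Intersecting these conditions over all $\ell$ (with a parallel Dieudonn\'e-module check at $\ell = p$) yields $\End(B) = \End(A) = R$. Alternatively, using Lemma~\ref{lem:choose_inverse} to find invertible $J_0 \subseteq Z$ with $I_0 J_0 = sZ$ principal, the composition $A \to B \to B/B[J_0]$ has kernel $A[s]$ and identifies $B/B[J_0]$ with $A$; descending twice exhibits a ring retraction $\End(B) \to R$ that is the identity on $R \hookrightarrow \End(B)$, which gives the same conclusion purely algebraically.

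For (2), the plan is a Picard-group argument. Define $F(I) := |A[I]|/N_{K/\QQ}(I)^d$ for invertible $I \subseteq Z$. First, $F(sZ) = 1$ for every nonzero $s \in Z$: since $|A[sZ]| = \deg(s)$ and $s$ acts centrally on $V_\ell(A)$, which is free of rank $d$ over $K \otimes \QQ_\ell$ for simple $A$ by Tate's theorem, the determinant computation gives $\deg(s) = \det_{\QQ_\ell}(s \mid V_\ell(A)) = N_{K/\QQ}(s)^d$. Second, $F(IJ) = F(I) F(J)$ when $I + J = Z$, via the scheme-theoretic decomposition $A[IJ] = A[I] \oplus A[J]$ (which uses $IR \cdot JR = IR \cap JR$ for the coprime two-sided ideals $IR, JR$ of $R$). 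Third, $F$ is constant on $\Pic(Z)$-classes: for primes $\pp_1, \pp_2$ in one class, Lemma~\ref{lem:choose_inverse} supplies an invertible prime $\qq$ in the inverse class, coprime to both, so that $F(\pp_i) F(\qq) = F(\pp_i \qq) = 1$. Fourth, analogous three-prime manipulations upgrade $F$ to a group homomorphism $\Pic(Z) \to \QQ_{>0}^\times$. Since $\Pic(Z)$ is finite while $\QQ_{>0}^\times$ is torsion-free, $F \equiv 1$, giving $|A[I_0]| = N_{K/\QQ}(I_0)^d$.

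The central obstacle is the degree formula $\deg(s) = N_{K/\QQ}(s)^d$ for central $s \in Z$, which rests on the fact that $V_\ell(A)$ has uniform rank $d$ over every completion $K_\lambda$ of $K$ at primes above $\ell$ — a structural consequence of Tate's theorem for simple abelian varieties over finite fields, used to identify $D \otimes \QQ_\ell = \End_{K \otimes \QQ_\ell}(V_\ell(A))$. Once this is in hand, the lattice argument for (1) and the Picard-group homomorphism argument for (2) proceed with essentially formal verifications.
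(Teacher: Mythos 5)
Your main line of argument is correct, but it takes a genuinely different route from the paper on both halves. For $\End_{\FF_q}(B)=R$ (with $B=A/A[I_0]$), the paper stays inside Waterhouse's kernel-ideal formalism: Lemma~\ref{kernel-ideal-lemma} shows $I_0R$ is an invertible two-sided kernel ideal, so $\End_{\FF_q}(B)$ equals the right order $\OO_r(I_0R)$ by Waterhouse's Proposition~3.9, and invertibility collapses that right order to $R$ in one line, $Rx=(I_0R)^{-1}(I_0R)x\subseteq R$. Your Tate-module argument (that $T_\ell(B)=I_{0,\ell}^{-1}T_\ell(A)$ has the same $D$-stabilizer as $T_\ell(A)$ because $I_0$ is central and locally invertible, plus a Dieudonn\'e check at $p$) is a valid substitute, in effect reproving the local description that underlies Waterhouse's result rather than citing it. For the counting formula, the paper uses Lemma~\ref{lem:choose_inverse} to pick one auxiliary prime $J_0$ in the inverse class with $N_{K/\QQ}(J_0)$ coprime to $|A[I_0]|$, then applies composition of isogenies, $|A[I_0]|\cdot|B[J_0]|=N_{K/\QQ}(I_0J_0)^{2g/[K:\QQ]}$, and a divisibility/coprimality argument; your Picard-group homomorphism $F(I)=|A[I]|/N_{K/\QQ}(I)^{d}$, trivial because $\Pic(Z)$ is finite and $\QQ_{>0}$ is torsion-free, works entirely on $A$ via the coprime decomposition $A[IJ]\cong A[I]\oplus A[J]$ and avoids both $B$ and Waterhouse's Proposition~3.12 (you also rederive $\deg(s)=N_{K/\QQ}(s)^{d}$ from freeness of $V_\ell(A)$ over $K\otimes\QQ_\ell$, where the paper simply cites Milne).

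Two points need repair or caution. First, your \emph{alternative} argument for $\End_{\FF_q}(B)=R$ via a ``ring retraction'' is not sound as written: the mere existence of isogenies $A\to B\to A$ whose composite is central does not force equality of endomorphism rings (two $\ell$-isogenous elliptic curves with different CM orders admit such a pair, the composite with the dual being $[\ell]$), and making the descent $\End(B)\to\End\bigl(B/B[J_0\End(B)]\bigr)\cong R$ rigorous requires the kernel-composition statement in a form that already presupposes control of $\End(B)$; let the $\ell$-adic argument carry the weight. Second, triviality of your homomorphism on $\Pic(Z)$ gives $F(\pp)=1$ only for invertible primes $\pp$; since you proved multiplicativity of $F$ only for coprime ideals, and an invertible ideal of a non-maximal order need not factor into primes (the paper's Lemma~\ref{lem:unique-fac} requires coprimality to the conductor), the jump to ``$F\equiv1$'' is a gap for $\pp^{e}$ and for general invertible $I_0$. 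It is fixed by one more instance of your own trick: choose a prime $\qq\in[I_0]^{-1}$ coprime to $I_0$, so that $F(I_0)F(\qq)=F(I_0\qq)=1$ and $F(\qq)=1$, whence $F(I_0)=1$.
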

    \begin{proof} 
    For convenience, write $B = A/A[I_0]$.
   Because $I_0R$ is a kernel ideal by Lemma \ref{kernel-ideal-lemma}, the endomorphism ring $\End_{\FF_q}(B) $ is equal to the right order of $I_0R$ \cite[Proposition 3.9]{waterhouse}, which we denote by
   $$
   	\OO_r(I_0R) = \{x\in D : (I_0R)\cdot x \subseteq I_0R\}
   $$
     Since $I_0R$ is a two-sided ideal, clearly $R\subseteq \OO_r(I_0R)$.  Conversely, let $x\in \OO_r(I_0R)$.  Then 
    $$
    	Rx = (I_0R)^{-1} (I_0R)x \subseteq (I_0R)^{-1}I_0R =R
$$
 because $I_0R$ is an invertible ideal.  Therefore, $x\in R$ and $\End_{\FF_q}(B) = \OO_r(I_0R) = R$.
 
    To prove the second claim, first assume that $I_0 = \alpha Z$ is a principal ideal.  Then ${A[I_0] = A[\alpha]}$ and $|A[I_0]| = \deg(\alpha)$, so the claim is known \cite[Proposition V.12.12]{milne}.
   
   Now suppose $I_0$ is not principal.  Because $I_0$ is an invertible ideal of $Z$, we can pick an ideal $J_0\subseteq Z$ such that  $I_0J_0 = \lambda Z$ and $N_{K/\QQ}(J_0)$ is coprime to $| A[I_0]|$. Indeed, there are only finitely many prime factors of $|A[I_0]|$, while there are infinitely many prime ideals in the  equivalence class $[I_0]^{-1}\in \Pic(Z)$ by Lemma \ref{lem:choose_inverse}.
     Multiplication of ideals corresponds to composition of isogenies \cite[Proposition 3.12]{waterhouse}, and therefore
   \begin{align*}
   	|A[I_0]| \cdot | B[J_0]|
		&= |A[I_0J_0] |\\
		&=| A[\lambda] |\\
		&=N_{K/\QQ}(\lambda)^{2g/[K:\QQ]}\\
		&=N_{K/\QQ}(I_0)^{2g/[K:\QQ]} N_{K/\QQ}(J_0)^{2g/[K:\QQ]}
   \end{align*}
   Now the fact that the rank of $ A[I_0]$ is coprime to $N_{K/\QQ}(J_0)$ means that $|A[I_0]|$ divides $N_{K/\QQ}(I_0)^{2g/[K:\QQ]}$.  But the same must be true for $J_0$, so $|B[J_0]|$ divides $N_{K/\QQ}(J_0)^{2g/[K:\QQ]}$ as well. Therefore, equality must hold, as claimed.
    \end{proof}

  Because we are ultimately only concerned with separable isogenies, we will restrict our attention to this case now.  Recall that the kernel of a separable isogeny $\phi: A \to A'$ can be identified with a finite subgroup of $A(\overline\FF_{q})$ of cardinality $\deg \phi$.
  
  \begin{lemma} 
If $r\geq 1$, and $\pp\subseteq Z$ is an invertible prime ideal  which corresponds to a separable isogeny,  then
$$
	A[\pp^r] \cong (Z/\pp^r)^{2g/[K: \QQ]}
$$
is an isomorphism of $Z$-modules.
   \end{lemma}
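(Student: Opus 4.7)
The plan is to combine the structure theorem for finitely generated modules over a discrete valuation ring with the two cardinality inputs supplied by Proposition \ref{prop:count}. Because $\pp$ is invertible, Lemma \ref{lem:inv_dvr} gives that the localization $Z_\pp$ is a discrete valuation ring. Since $\pp$ is a maximal ideal of the one-dimensional order $Z$, the canonical map $Z/\pp^r \to Z_\pp/\pp^r Z_\pp$ is an isomorphism, exhibiting $Z/\pp^r$ as a local Artinian principal ideal ring. The separability hypothesis on the isogeny associated to $\pp$ means $A[\pp^r]$ is a finite \'etale group acted on by $Z$, and since it is annihilated by $\pp^r$, it is naturally a finitely generated module over $Z/\pp^r$.

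Applying the structure theorem for modules over a principal ideal ring, I would write
$$
A[\pp^r] \cong \bigoplus_{i=1}^{k} Z/\pp^{e_i}
$$
as $Z$-modules, for some integers $1 \leq e_1, \ldots, e_k \leq r$. The task then reduces to pinning down $k$ and the $e_i$. Proposition \ref{prop:count} first gives
$$
|A[\pp^r]| = N_{K/\QQ}(\pp)^{rd} = |Z/\pp|^{rd},
$$
which forces $\sum_{i=1}^{k} e_i = rd$. For the second constraint, I would pass to the $\pp$-torsion: under the displayed decomposition, the elements killed by $\pp$ form a $Z/\pp$-vector space of dimension $k$, so $|A[\pp]| = |Z/\pp|^k$. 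Comparing this with the value $|A[\pp]| = |Z/\pp|^d$ supplied by Proposition \ref{prop:count} applied to $\pp$ itself yields $k = d$. With $d$ summands each of size at most $r$ summing to $rd$, every $e_i$ must equal $r$, and the desired isomorphism $A[\pp^r] \cong (Z/\pp^r)^d$ follows.

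I do not anticipate a significant obstacle: both cardinality inputs are already in hand from Proposition \ref{prop:count}, and the only ring-theoretic content is that $Z_\pp$ is a DVR, which is immediate from Lemma \ref{lem:inv_dvr}. The only bookkeeping worth flagging is that the identification $Z/\pp^r \cong Z_\pp/\pp^r Z_\pp$ relies on $\pp$ being maximal in $Z$, which holds because $Z$ is one-dimensional Noetherian. Conceptually, the whole argument comes down to the observation that a finite module over $Z_\pp/\pp^r Z_\pp$ whose total cardinality and $\pp$-torsion dimension match those of $(Z/\pp^r)^d$ is forced by the invariant-factor decomposition to be free of rank $d$.
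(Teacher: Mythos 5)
Your proof is correct and follows essentially the same route as the paper: reduce to the structure theorem for finitely generated modules over the discrete valuation ring $Z_\pp$ (via Lemma \ref{lem:inv_dvr}) and pin down the invariants using the cardinality counts from Proposition \ref{prop:count}. The only difference is cosmetic --- the paper inducts on $r$, using the containment $A[\pp^{r-1}]\subseteq A[\pp^r]$ to fix the number of summands, whereas you read that number off directly from the $\pp$-torsion $A[\pp]$ and then force all exponents to equal $r$ by the total cardinality; this is the same pair of counts, organized without induction.
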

    \begin{proof} 
   First, $A[\pp]$ is a $Z/\pp$-module.  But $Z/\pp$ is a field, so $A[\pp]$ is a vector space, and therefore $A[\pp] \cong (Z/\pp)^m$ for some $m$.  We have $m = 2g/[K:\QQ]$ by counting the cardinality of each side with Proposition \ref{prop:count}.  
   
Now we proceed by induction.  Given $r\geq 2$, we know that $A[\pp^r]$ is a finitely generated module over $Z/\pp^r\cong Z_\pp/\pp^rZ_\pp$.  Because $Z_\pp$ is a discrete valuation ring by Lemma \ref{lem:inv_dvr}, we can apply the structure theorem for finitely generated modules \cite[Theorem 12.1.6]{DF} to deduce that $A[\pp^r]$ is the direct sum of modules of the form $Z_\pp/\pp^iZ_\pp \cong Z/\pp^i$ for $1\leq i\leq r$.

Further, $A[\pp^r]$ contains $A[\pp^{r-1}]$, which is of the form $(Z/\pp^{r-1})^{2g/[K:\QQ]}$ by assumption.  Thus, writing $A[\pp^r] \cong Z/\pp^{r_1}\times \dots\times Z/\pp^{r_s}$ implies that $s = 2g/[K:\QQ]$.  By counting the cardinality, we must have $r_j = r$ for all $1\leq j\leq s$.
    \end{proof}
  
  \subsection{Proof of main result} Now we are ready to prove the main result of this section.
  
   \begin{proof}[Proof of Proposition \ref{inv-key-prop}]   
We factor $(s) = \pp_1^{e_1}\dots \pp_r^{e_r}$.    Notice that for any nonzero $I,J\subseteq R$, we have $A[I]\cap A[J] = A[I + J]$ by definition because $I + J$ is generated by $I \cup J$.  Thus, coprime ideals correspond to subgroups with trivial intersection, and we conclude that we have an isomorphism of $Z$-modules:
$$
	A[s] \cong A[\pp_1^{e_1}]\times \dots \times A[\pp_r^{e_r}].
$$
  For each $1\leq i\leq r$, we see that $A[\pp_i^{e_i}]\cong (Z/\pp_i^{e_i})^{2g/[K:\QQ]}$ by the proposition above.  By the Chinese Remainder Theorem, we conclude that
  	$$
		A[s] \cong (Z/Zs)^{2g/[K:\QQ]}
	$$
	as desired.
  
 Now write $d = 2g/[K:\QQ]$ for convenience. To prove the second claim, we notice that the endomorphism ring of the $Z$-module $A[s]\cong (Z/Zs)^d$ is the ring of $d\times d$ matrices over $Z/Zs$, which we write as $\End_{Z}(A[s]) = \MM_d(Z/Zs)$.   As in the proof of Proposition \ref{gor-key-prop}, we see that $A[s]$ is a faithful $R/Rs$-module, so the map $R/Rs\to \End_{Z}(A[s])$ induced by the natural $R$-module structure on $A[s]$ is injective.  Moreover, $s$ defines a linear map on the lattice $R\subseteq D$, so we have
 $$
 	\#(R/Rs) = N_{D/\QQ}(s) =N_{K/\QQ}(N_{D/K}(s)) = N_{K/\QQ}(s)^{[D:K]},
$$
where $N_{D/\QQ}(s)$ and $N_{D/K}(s)$ denote the determinants of $s: D\to D$ as a linear map over $\QQ$ and $K$, respectively.
On the other hand, it is clear that
$$
	\#\mathrm{M}_d(Z/Zs) = N_{K/\QQ}(s)^{d^2} = N_{K/\QQ}(s)^{[D:K]}
$$
because $d^2 = [D:K]$; see \cite[Theorem 8]{wm}.
  Therefore, $R/Rs$ and $\mathrm{M}_d(Z/Zs)$ have the same cardinality, so the injective ring map $R\to \mathrm{M}_d(Z/Zs)$ is an isomorphism.

 Therefore, to prove that $A[s]$ has exactly one $R$-module structure, it suffices to show that $(Z/Zs)^d$ has exactly one $\MM_d(Z/Zs)$-module structure. Morita equivalence states that every $\MM_d(Z/Zs)$-module $M'$ is isomorphic to $M^d$ for some $Z/Zs$-module $M$, where $M^d$ is given the natural left $\MM_d(Z/Zs)$-module structure defined by applying matrices to column vectors; see \cite[Proposition 1.4]{jacobson}.  Thus we simply need to know that if a $Z$-module $M$ satisfies $M^d \cong (Z/Zs)^d$, then $M \cong Z/Zs$.  But, as above, $s$ is the product of invertible primes, so $M$ must be of the desired form.
 
 Finally, we notice that $\MM_d(Z/Zs)$ is isomorphic to $((Z/Zs)^d)^d$ as a module over itself, which proves the final claim.
     \end{proof}

  \section{Considering the Algebraic Closure}
  \label{sec:Fbar}
Now that we have considered the module structure of the group of rational points of a simple abelian variety over a finite field $\FF_q$, we turn our attention towards the algebraic closure $\Fbar_q$.  Because $\Fbar_q$ is the union of all its finite subfields, we can stitch together the isomorphisms from Propositions \ref{gor-key-prop} and \ref{inv-key-prop} to recover the following theorem.  

As before, given a simple abelian variety $A$ of dimension $g$ over $\FF_q$, we write $R = \End_{\FF_q}(A)$ and define $Z$ to be the center of $R$. Let $[Z : \ZZ]$ denote the rank of $Z$ as a $\ZZ$-module.  Write $S\subseteq Z$ for the set of separable isogenies in $Z$, and $R_S$ (resp. $Z_S$) for the left $R$-submodule (resp. $Z$-submodule) of the endomorphism algebra $R\otimes \QQ$ generated by the set $\{s^{-1} : s\in S\}$.  Equivalently, these can be recognized as localizations by the set $S$.

 \begin{theorem}
 \label{thm:Fbar}
 For  $g\geq 1$, let $A$ be a simple abelian variety over $\FF_q$ of dimension $g$.  Let $R = \End_{\Fbar_q}(A)$, and let $Z$ be the center of $R$. 

 \begin{enumerate}[(a)]
 \item  If $[\QQ(\pi) : \QQ] = 2g$ and $R$ is a Gorenstein ring, then
 $$
 	A(\Fbar_{q}) \cong R_S/R.
 $$
 is an isomorphism of $R$-modules.
 \item If $Z$ is a maximal order, then
 $$
		A(\Fbar_{q}) \cong (Z_S/Z)^{d}.
$$
is an isomorphism of $Z$-modules where $d = 2g/[Z : \ZZ]$.
Moreover, this $Z$-module has exactly one left $R$-module structure, up to isomorphism, and there is an isomorphism
$$
	A(\Fbar_{q})^{ d} \cong R_S/R
$$
as $R$-modules.
 \end{enumerate} 
 \end{theorem}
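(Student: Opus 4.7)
The plan is to realize $A(\Fbar_q)=\bigcup_n A[\pi^n-1]$ as a direct limit, apply Propositions~\ref{gor-key-prop} and~\ref{inv-key-prop} to each term, and identify the resulting limit with $R_S/R$ (respectively $(Z_S/Z)^d$). For $n\mid m$, set $c_{n,m}=(\pi^m-1)/(\pi^n-1)\in Z\subseteq R$, so that $\pi^m-1=c_{n,m}(\pi^n-1)$. A routine computation (using that every separable $s\in Z$ divides some $\pi^N-1$, because $A[s]$ is a finite subset of $A(\Fbar_q)$) gives
$$R_S/R\;\cong\;\varinjlim_{n} R/R(\pi^n-1),$$
where the transition $R/R(\pi^n-1)\to R/R(\pi^m-1)$ is multiplication by $c_{n,m}$ and the isomorphism sends $r+R(\pi^n-1)\mapsto(\pi^n-1)^{-1}r+R$. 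On the geometric side, $A(\Fbar_q)=\varinjlim_n A[\pi^n-1]$ with the natural inclusions, and by cofinality both limits may be computed along any chain $n_1\mid n_2\mid\cdots$. It therefore suffices to construct a compatible family $\phi_n\colon R/R(\pi^n-1)\to A[\pi^n-1]$ of the isomorphisms supplied by Propositions~\ref{gor-key-prop} and~\ref{inv-key-prop}, and then pass to the direct limit.

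For part~(a), Proposition~\ref{gor-key-prop} applied to $s=\pi^n-1$ gives such a $\phi_n$ determined by the choice of an $R$-module generator $P_n=\phi_n(1)$ of $A[\pi^n-1]$; compatibility then amounts to requiring $c_{n,m}P_m=P_n$ whenever $n\mid m$. I would restrict to a cofinal chain $n_1\mid n_2\mid\cdots$ and build the $P_{n_k}$ inductively. A degree count shows that the multiplication map $c_{n_{k-1},n_k}\colon A[\pi^{n_k}-1]\to A[\pi^{n_{k-1}}-1]$ is surjective, and under the identification $A[\pi^{n_k}-1]\cong R/R(\pi^{n_k}-1)$ it carries generators to generators, because the subgroup $A[\pi^{n_{k-1}}-1]$ corresponds to the cyclic submodule ${c_{n_{k-1},n_k}R/R(\pi^{n_k}-1)\cong R/R(\pi^{n_{k-1}}-1)}$. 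Starting from any generator $P'$ of $A[\pi^{n_k}-1]$, one has $c_{n_{k-1},n_k}P'=uP_{n_{k-1}}$ for some unit $u\in(R/R(\pi^{n_{k-1}}-1))^\times$, and setting $P_{n_k}=\tilde u^{-1}P'$ for any unit lift $\tilde u\in(R/R(\pi^{n_k}-1))^\times$ of $u$ then satisfies $c_{n_{k-1},n_k}P_{n_k}=P_{n_{k-1}}$.

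The existence of the unit lift $\tilde u$ is the main technical step, but it follows from the Chinese Remainder Theorem in the finite commutative quotient $R/R(\pi^{n_k}-1)$. A lift of $u$ can fail to be a unit only at maximal ideals $\qq\subseteq R$ that contain $c_{n_{k-1},n_k}$ but not $\pi^{n_{k-1}}-1$, and for these finitely many $\qq$ the element $\pi^{n_{k-1}}-1$ is a unit modulo $\qq$; adjusting $\tilde u$ within its congruence class modulo $\pi^{n_{k-1}}-1$ therefore gives enough freedom, via CRT, to arrange $\tilde u\notin\qq$. With the $\phi_{n_k}$'s assembled into a compatible direct system, passing to the direct limit yields the $R$-module isomorphism $A(\Fbar_q)\cong R_S/R$ of part~(a).

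Part~(b) runs in parallel, using Proposition~\ref{inv-key-prop} instead: since $Z$ is a maximal order, hence Dedekind, $(\pi^n-1)Z$ is automatically a product of invertible primes, so the proposition gives $A[\pi^n-1]\cong(Z/Z(\pi^n-1))^d$ as $Z$-modules, and the same inductive construction, even cleaner in the Dedekind setting, produces compatible $Z$-module isomorphisms whose direct limit is $A(\Fbar_q)\cong(Z_S/Z)^d$. The uniqueness of the $R$-module structure on $A(\Fbar_q)$ and the resulting isomorphism $A(\Fbar_q)^d\cong R_S/R$ then follow from Morita equivalence applied in the limit, exactly as they do for a single $n$ in Proposition~\ref{inv-key-prop}. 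The principal obstacle in both parts is compatibility of the $\phi_n$'s along the cofinal chain; once the unit-lifting lemma is in hand, the remainder is a direct-limit bookkeeping argument.
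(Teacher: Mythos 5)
Your proposal is correct in substance but proves the theorem by a genuinely different mechanism than the paper. The paper never constructs compatible isomorphisms by hand: it observes that for each separable $s$ the set $W_s$ of isomorphisms $A[s]\cong s^{-1}R/R$ (resp.\ $(s^{-1}Z/Z)^d$) is a nonempty \emph{finite} set, that divisibility $s\mid t$ induces restriction maps $W_t\to W_s$, and then invokes the theorem that a projective limit of nonempty finite sets over a directed index set is nonempty (Bourbaki), so a coherent family exists without ever proving the restriction maps are surjective; the union over $s\in S$ then gives the result, and the uniqueness of the $R$-structure in part (b) is handled the same way, by restricting any $R$-structure to the $s$-torsion submodules $(s^{-1}Z/Z)^d$ and rerunning the limit argument. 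You instead work along a cofinal divisibility chain, identify $R_S/R\cong\varinjlim_n R/R(\pi^n-1)$ with multiplication-by-$c_{n,m}$ transition maps, and build the generators $P_{n_k}$ inductively; the key input is that units of the finite commutative quotient $R/R(\pi^{n_{k-1}}-1)$ lift to units of $R/R(\pi^{n_k}-1)$, which your CRT argument does establish, and which amounts to showing the restriction maps $W_{t}\to W_{s}$ are surjective --- strictly stronger than what the paper needs. What your route buys is an explicit, constructive compatible system with no appeal to the compactness theorem; what it costs is extra bookkeeping that you should make explicit: in part (b) the lifting must be done for bases rather than single generators, i.e.\ you need surjectivity of $\GG\mathrm{L}$-type groups, namely that $\MM_d(Z/Z(\pi^{m}-1))^\times \to \MM_d(Z/Z(\pi^{n}-1))^\times$ is onto (same CRT/local argument, since invertibility over a finite local ring is detected modulo the maximal ideal); and the uniqueness of the $R$-module structure on $A(\Fbar_q)$ does not follow from ``Morita in the limit'' as stated, because the limit ring is not a matrix ring over a finite ring --- you must argue, as the paper does, that any $R$-structure on $(Z_S/Z)^d$ preserves the $s$-torsion submodules (since $s$ is central), apply the finite-level uniqueness there, and then run your same chain-and-lifting construction (now lifting $R$-module automorphisms, i.e.\ units of $\MM_d(Z/Z(\pi^n-1))$) to splice the finite-level isomorphisms into a global one. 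With those two points spelled out, your argument is a complete and valid alternative proof.
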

 
  \begin{proof} 
  Notice  that, in any case, we have
  $$
  	A(\Fbar_q) = \bigcup_{s \in S} A[s] = \bigcup_{n \geq 1} A[\pi^n - 1] = \bigcup_{n\geq 1} A(\FF_{q^n})
$$
where $\pi$ denotes the Frobenius endomorphism of $A$ over $\FF_q$.
Indeed, it is clear that each term contains the next, and the final term equals the first.  This allows us to deduce the theorem after describing only $A[s]$ for $s\in S$.
  
 For part (a), the hypotheses allow us to apply Proposition \ref{gor-key-prop} to obtain isomorphisms $A[s] \cong R/Rs \cong s^{-1}R/R$ for every separable $s\in R$.  In other words, for each $s\in S$, the set $W_s$ of isomorphisms between $A[s]$ and $s^{-1}R/R$ is nonempty.   Moreover, if $s$ and $t$ are two separable endomorphisms such that $s$ divides $t$, then the isomorphism $A[t]\xrightarrow{\sim} t^{-1}R/R$ maps the submodule $A[s]$ isomorphically to $s^{-1}R/R$. Thus the set $\{W_s\}_{s\in S}$ form a projective system of nonempty finite sets, and the projective limit of this system is nonempty \cite[Th\'eor\`eme 1, \S 7.4]{bourbaki}. In particular, there exists a simultaneous choice of isomorphisms $A[s]\to s^{-1}R/R$ for all $s\in S$ that commutes with the natural inclusions of sets, and the result follows by taking the union over all $s\in S$.
 
 Part (b) follows similarly.  Indeed,  for each $s\in S$, Proposition \ref{inv-key-prop} provides an isomorphism  $A[s] \cong (Z/Zs)^d \cong (s^{-1}Z/Z)^d$.  By the same projective limit argument given for part (a), we obtain the desired isomorphism $A(\Fbar_{q}) \cong (Z_S/Z)^{d}$.  Similarly, we obtain the isomorphism $A(\Fbar_{q})^{ d} \cong R_S/R$.
 
 Finally, any two $R$-module structures on $(Z_S/Z)^{d}$ give rise to two $R$-module structures on $(s^{-1}Z/Z)^d$ for each $s\in S$. Since this structure is known to be unique by Proposition \ref{inv-key-prop}, we obtain compatible isomorphisms for all $s\in S$, and yet again obtain the desired isomorphism through the projective limit construction.
  \end{proof}


\begin{thebibliography}{10}

\bibitem{bglgmmst}
{\sc Ballentine, S., Guillevic, A., Lorenzo~Garc\'{\i}a, E., Martindale, C.,
  Massierer, M., Smith, B., and Top, J.}
\newblock Isogenies for point counting on genus two hyperelliptic curves with
  maximal real multiplication.
\newblock In {\em Algebraic geometry for coding theory and cryptography},
  vol.~9 of {\em Assoc. Women Math. Ser.} Springer, Cham, 2017, pp.~63--94.

\bibitem{bourbaki}
{\sc Bourbaki, N.}
\newblock {\em \'{E}l\'{e}ments de math\'{e}matique. {T}h\'{e}orie des
  ensembles}.
\newblock Hermann, Paris, 1970.

\bibitem{bjw}
{\sc Brooks, E.~H., Jetchev, D., and Wesolowski, B.}
\newblock Isogeny graphs of ordinary abelian varieties.
\newblock {\em Res. Number Theory 3\/} (2017), Art. 28, 38.

\bibitem{cs}
{\sc Centeleghe, T.~G., and Stix, J.}
\newblock Categories of abelian varieties over finite fields, {I}: {A}belian
  varieties over {$\Bbb{F}_p$}.
\newblock {\em Algebra Number Theory 9}, 1 (2015), 225--265.

\bibitem{DF}
{\sc Dummit, D.~S., and Foote, R.~M.}
\newblock {\em Abstract algebra}, third~ed.
\newblock John Wiley \& Sons, Inc., Hoboken, NJ, 2004.

\bibitem{galbraith}
{\sc Galbraith, S.~D.}
\newblock Constructing isogenies between elliptic curves over finite fields.
\newblock {\em LMS J. Comput. Math. 2\/} (1999), 118--138.

\bibitem{gks}
{\sc Gaudry, P., Kohel, D., and Smith, B.}
\newblock Counting points on genus 2 curves with real multiplication.
\newblock In {\em Advances in cryptology---{ASIACRYPT} 2011}, vol.~7073 of {\em
  Lecture Notes in Comput. Sci.} Springer, Heidelberg, 2011, pp.~504--519.

\bibitem{ij}
{\sc Ionica, S., and Joux, A.}
\newblock Pairing the volcano.
\newblock {\em Math. Comp. 82}, 281 (2013), 581--603.

\bibitem{it}
{\sc Ionica, S., and Thom\'{e}, E.}
\newblock Isogeny graphs with maximal real multiplication.
\newblock {\em J. Number Theory 207\/} (2020), 385--422.

\bibitem{jacobson}
{\sc Jacobson, N.}
\newblock {\em Basic algebra. {II}}, second~ed.
\newblock W. H. Freeman and Company, New York, 1989.

\bibitem{kani}
{\sc Kani, E.}
\newblock Products of {CM} elliptic curves.
\newblock {\em Collect. Math. 62}, 3 (2011), 297--339.

\bibitem{kohel}
{\sc Kohel, D.~R.}
\newblock {\em Endomorphism rings of elliptic curves over finite fields}.
\newblock ProQuest LLC, Ann Arbor, MI, 1996.
\newblock Thesis (Ph.D.)--University of California, Berkeley.

\bibitem{lenstra}
{\sc Lenstra, Jr., H.~W.}
\newblock Complex multiplication structure of elliptic curves.
\newblock {\em J. Number Theory 56}, 2 (1996), 227--241.

\bibitem{LD}
{\sc Lv, C., and Deng, Y.}
\newblock On orders in number fields: {P}icard groups, ring class fields and
  applications.
\newblock {\em Sci. China Math. 58}, 8 (2015), 1627--1638.

\bibitem{martindale}
{\sc Martindale, C.}
\newblock {\em Isogeny Graphs, Modular Polynomials, and Applications}.
\newblock ProQuest LLC, Ann Arbor, MI, 2018.
\newblock Thesis (Ph.D.)--University of Leiden.

\bibitem{matsumura}
{\sc Matsumura, H.}
\newblock {\em Commutative ring theory}, vol.~8 of {\em Cambridge Studies in
  Advanced Mathematics}.
\newblock Cambridge University Press, Cambridge, 1986.
\newblock Translated from the Japanese by M. Reid.

\bibitem{milne}
{\sc Milne, J.~S.}
\newblock Abelian varieties.
\newblock In {\em Arithmetic geometry ({S}torrs, {C}onn., 1984)}. Springer, New
  York, 1986, pp.~103--150.

\bibitem{Neukirch}
{\sc Neukirch, J.}
\newblock {\em Algebraic number theory}, vol.~322 of {\em Grundlehren der
  Mathematischen Wissenschaften [Fundamental Principles of Mathematical
  Sciences]}.
\newblock Springer-Verlag, Berlin, 1999.
\newblock Translated from the 1992 German original and with a note by Norbert
  Schappacher, With a foreword by G. Harder.

\bibitem{springer}
{\sc Springer, C.}
\newblock Computing the endomorphism ring of an ordinary abelian surface over a
  finite field.
\newblock {\em J. Number Theory 202\/} (2019), 430--457.

\bibitem{waterhouse}
{\sc Waterhouse, W.~C.}
\newblock Abelian varieties over finite fields.
\newblock {\em Ann. Sci. \'{E}cole Norm. Sup. (4) 2\/} (1969), 521--560.

\bibitem{wm}
{\sc Waterhouse, W.~C., and Milne, J.~S.}
\newblock Abelian varieties over finite fields.
\newblock In {\em 1969 {N}umber {T}heory {I}nstitute ({P}roc. {S}ympos. {P}ure
  {M}ath., {V}ol. {XX}, {S}tate {U}niv. {N}ew {Y}ork, {S}tony {B}rook,
  {N}.{Y}., 1969)\/} (1971), pp.~53--64.

\end{thebibliography}
\end{document}